%
%
%
\documentclass{amsproc}
\usepackage{amssymb,amsmath,amsthm,latexsym,color}
\usepackage{amsfonts}\usepackage{lscape}

\newtheorem{theorem}{Theorem}[section]

\newtheorem{prop}[theorem]{Proposition}
\newtheorem{cor}[theorem]{Corollary}

\theoremstyle{definition}

\theoremstyle{remark}
\newtheorem{remark}[theorem]{Remark}

\numberwithin{equation}{section}



\def\B{\mathbb B}

\def\cC{\mathcal C}

\def\cP{{\mathcal P}}

\def\cS{\mathcal S}

\def\cN{\mathcal N}

\def\cK{\mathcal K}

\def\cA{\mathcal A}
\def\cB{\mathcal B}
\def\cD{\mathcal D}
\def\cG{\mathcal G}
\def\cCG{\mathcal {CG}}
\def\cCM{\mathcal {CM}}
\def\cDY{\mathcal {DY}}
\def\cBLP{\mathcal {BLP}}
\def\cCHK{\mathcal {CHK}}
\def\cBH{\mathcal {BH}}
\def\cZKW{\mathcal {ZKW}}
\def\cPW{\mathcal {PW}}

\def\K{\mathbb K}
\def\F{\mathbb F}
\def\S{\mathbb S}
\def\V{\mathbb V}
\def\N{\mathbb N}

\def\S{\mathbb S}

\def\cP{{\mathcal LMPTB}}
\def\bB{\mathbb B}

\def\ps@headings{
 \def\@oddhead{\footnotesize\rm\hfill\runningheadodd\hfill\thepage}
 \def\@evenhead{\footnotesize\rm\thepage\hfill\runningheadeven\hfill}
 \def\@oddfoot{}
 \def\@evenfoot{\@oddfoot}
}
\begin{document}

\title{On the nuclei of a finite semifield}

\author{Giuseppe Marino}
\address{Dipartimento di Matematica, Seconda Universit\`a degli Studi di Napoli,
I--\,81100 Caserta, Italy}
\email{giuseppe.marino@unina2.it}
\thanks{This work was
supported by the Research Project of MIUR (Italian Office for
University and Research) ``Geometrie su Campi di Galois, piani di
traslazione e geometrie di incidenza''.}

\author{Olga Polverino}
\address{Dipartimento di Matematica, Seconda Universit\`a degli Studi di Napoli,
I--\,81100 Caserta, Italy}
\email{olga.polverino@unina2.it}

\subjclass{12K10, 51A40}
\date{}

\keywords{Semifield, spread set, isotopy}

\begin{abstract}
In this paper  we collect and improve the techniques for calculating the nuclei of a semifield and
we use these tools to determine the order
 of the nuclei and of the center of some commutative presemifields of odd characteristic
recently constructed.
\end{abstract}

\maketitle

\section{Introduction}
Semifields are algebras satisfying all the axioms for a skewfield
except (possibly) associativity of the multiplication. From a
geometric point of view, semifields coordinatize certain
translation planes ({\em semifield planes}) which are
planes of Lenz--Barlotti class $V$ (see, e.g., \cite[Sec.
5.1]{Dembowski1968}) and, by \cite{Albert1960}, the isomorphism
relation between two semifield planes corresponds to the isotopism
relation between the associated semifields. The first example of a
finite semifield which is not a field was constructed by Dickson
about a century ago in \cite{Dickson1906-1}, using the term {\em
nonassociative division ring}. These examples are commutative
semifields of order $q^{2k}$ and they exist for each $q$ odd prime
power and for each $k>1$ odd. Since then and until 2008, the only
other known families of commutative semifields of odd
characteristic $p$, existing for each value of $p$, have been some
Generalized twisted fields constructed by Albert in
\cite{Albert1961P}.

The relationship between  commutative semifields of odd order and
planar DO polynomials has given new impetus to construct new examples of
such algebraic structures. Indeed in \cite{ZaKyWa2009},
\cite{BuHe2008}, \cite{Bierbrauer2010}, \cite{LuMaPoTr2011},
\cite{BierbrauerSub} and \cite{ZhPoSub}, several families of commutative semifields in
odd characteristic have been constructed.

In this paper  we collect and improve the results of the last
years on techniques for calculating the nuclei of a semifield and
we use these tools to determine the order
 of the nuclei and of the center of the
semifields presented in \cite{ZaKyWa2009}, \cite{BuHe2008} and
\cite{Bierbrauer2010}. From these results  we are able to prove
that, when the order of the center is grater than 3, the
Zha--Kyureghyan--Wang presemifields and the Budaghyan--Helleseth presemifields of \cite{BierbrauerSub} are
new. Precisely, each Zha--Kyureghyan--Wang presemifield  \cite{ZaKyWa2009} is not
isotopic to any Budaghyan--Helleseth presemifield \cite{BuHe2008} and both of them
are not isotopic to any previously known presemifield. Also, using the same arguments we show that the Bierbrauer presemifields \cite{Bierbrauer2010} are isotopic to neither a Dickson semifield, nor to a Generalized twisted field and to any of the known presemifields in characteristic 3.

\section{Isotopy relation and nuclei}
A finite \textit{semifield} $\mathbb{S}=(S,+,\star)$ is a finite
binary algebraic structure satisfying all the axioms for a
skewfield except (possibly) associativity of multiplication. The
subsets of $S$

\[
\N_{l}=\{a\in S\,|\,(a\star b)\star c=a\star(b\star c),\,\forall b,c\in S\},
\]%
\[
\N_{m}=\{b\in S\,|\,(a\star b)\star c=a\star(b\star c),\,\forall a,c\in S\},
\]%
\[
\N_{r}=\{c\in S\,|\,(a\star b)\star c=a\star(b\star c),\,\forall a,b\in S\}
\]
and
\[
\K=\{a\in \N_{l}\cap \N_{m}\cap \N_{r}\,|\,a\star b=b\star a,\,\forall b \in
S\}
\]

are fields and are known, respectively, as the \textit{left
nucleus}, \textit{middle nucleus}, \textit{right nucleus} and
\textit{center} of the semifield. A finite semifield is a vector
space over its nuclei and its center.

If $\mathbb{S}$ satisfies all axioms for a semifield except,
possibly,  the existence of an identity element for the
multiplication then we call it a \textit{presemifield}. The additive
group of a presemifield is an elementary abelian $p$--group, for
some prime $p$ called the {\it characteristic} of $\S$.

Two presemifields, say $\S=(S,+,\star)$ and $\S'=(S',+,\star')$,
with characteristic $p$, are said to be \textit{isotopic} if there
exist three invertible $\F_p$-linear maps $g_1,g_2,g_3$ from $S$ to
$S'$ such that
$$ g_1(x) \star' g_2(y) = g_3(x \star y) $$
for all $x,y \in S$; the triple $(g_1,g_2,g_3)$ is an {\it
isotopism} between $\S$ and $\S'$. In each isotopy class of a
presemifield we can find  semifields (see~\cite[p.
204]{Knuth1965}). The sizes of the nuclei as well as the size of the
center of a semifield are invariant under isotopy; for this reason
we refer to them as the {\it parameters} of $\S$. Whereas, if $\S$
is a presemifield, then the  {\it parameters} of $\S$ will be the
parameters of any semifield isotopic to it. If $\S=(S,+,\star)$ is a
presemifield, then $\S^d=(S,+,\star^d)$, where $x\star^d y=y\star x$ is
a presemifield as well, and it is called the {\em dual} of $\S$. For
a recent overview on the theory of finite semifields see Chapter 6 \cite{LaPo201*} in the collected work \cite{deBeSt201*}.

\bigskip

Let $\S=(S,+,\star)$ be a presemifield  having characteristic $p$
and order $p^t$. The set

$$\cC=\{\varphi_y\,:\, x\in S\, \rightarrow \, x\star y\in S\,|\,
y\in S\} \subset \V={\rm End}_{\F_p}(S)$$

is the {\it semifield spread set} associated with $\S$ ({\it spread
set} for short): $\cC$ is an $\F_p$-subspace of $\V$ of rank $t$
and each non-zero element of $\cC$ is invertible. Also, if $\S$ is a
semifield and $e$ is the identity element of $\S$, then
$id=\varphi_e\in \cC$. It can be seen that, by translating the isotopy
relation between presemifields in terms of the associated spread
sets, just two maps of the triple  $(g_1,g_2,g_3)$ are involved. Indeed

\begin{prop}{\rm \cite[Prop.2.1]{MaPoSub}}\label{prop:IsotopicSemifAndSpreadSets}
Let $\S_1=(S_1,+,\bullet)$ and  $\S_2=(S_2,+,\star)$ be two
presemifields and let $\cC_1$ and $\cC_2$ be the corresponding
spread sets. Then $\S_1$ and $\S_2$ are isotopic under the
isotopism $(g_1,g_2,g_3)$ if and only if
$\cC_2=g_3\cC_1g_1^{-1}=\{g_3\circ \varphi_{y}\circ g_1^{-1}|\
y\in S_1\}$\footnote{Here "$\circ$" stands for composition of
maps.}.
\end{prop}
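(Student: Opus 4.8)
The plan is to recast the isotopism identity as a statement purely about the two spread sets, and then to read off both directions. Write $\varphi^{(1)}_y\colon x\mapsto x\bullet y$ for the generic element of $\cC_1$ (with $y\in S_1$) and $\varphi^{(2)}_z\colon x\mapsto x\star z$ for that of $\cC_2$ (with $z\in S_2$), and recall that $g_1,g_2,g_3$ are invertible $\F_p$-linear maps $S_1\to S_2$. Fixing $y$ and reading $g_1(x)\star g_2(y)=g_3(x\bullet y)$ as an equality of functions of $x$, the left-hand side is $\varphi^{(2)}_{g_2(y)}\circ g_1$ and the right-hand side is $g_3\circ\varphi^{(1)}_y$; composing on the right by $g_1^{-1}$ this becomes
\[
\varphi^{(2)}_{g_2(y)}=g_3\circ\varphi^{(1)}_y\circ g_1^{-1}.
\]
Hence $(g_1,g_2,g_3)$ is an isotopism between $\S_1$ and $\S_2$ exactly when this identity holds for every $y\in S_1$. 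This reformulation is the heart of the matter; both implications fall out of it once one keeps track of the parametrisations $\psi_i\colon y\mapsto\varphi^{(i)}_y$.

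For the direct implication, assume $(g_1,g_2,g_3)$ is an isotopism. Since $g_2$ maps $S_1$ bijectively onto $S_2$, as $y$ runs over $S_1$ the element $z=g_2(y)$ runs over all of $S_2$, so
\[
\cC_2=\{\,\varphi^{(2)}_z\mid z\in S_2\,\}=\{\,g_3\circ\varphi^{(1)}_y\circ g_1^{-1}\mid y\in S_1\,\}=g_3\,\cC_1\,g_1^{-1}.
\]

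For the converse, suppose $\cC_2=g_3\,\cC_1\,g_1^{-1}$ with $g_1,g_3\colon S_1\to S_2$ invertible $\F_p$-linear; I must produce $g_2$. The one non-formal ingredient is that, for a presemifield, $\psi_i\colon y\mapsto\varphi^{(i)}_y$ is an $\F_p$-linear bijection of $S_i$ onto $\cC_i$: linearity follows from the two distributive laws, surjectivity is the definition of $\cC_i$, and injectivity uses the absence of zero divisors, since $\psi_i(w)=0$ forces $x\star w=0$ for all $x$ and hence $w=0$. Granting this, let $\Theta\colon\cC_1\to\cC_2$ send $\varphi$ to $g_3\circ\varphi\circ g_1^{-1}$; it has image exactly $\cC_2$ by hypothesis, it is $\F_p$-linear, and it is injective because $g_1$ and $g_3$ are invertible, hence an $\F_p$-linear bijection. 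Then $g_2:=\psi_2^{-1}\circ\Theta\circ\psi_1\colon S_1\to S_2$ is an $\F_p$-linear bijection, and by construction $\varphi^{(2)}_{g_2(y)}=g_3\circ\varphi^{(1)}_y\circ g_1^{-1}$ for every $y\in S_1$; by the reformulation, $(g_1,g_2,g_3)$ is an isotopism between $\S_1$ and $\S_2$. Since $\psi_2$ is injective, this $g_2$ is the unique map completing $g_1,g_3$ to an isotopism, which is why the equivalence may be stated with $g_2$ suppressed on the right.

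I do not expect a genuine obstacle beyond bookkeeping: the only step that is not pure formal manipulation is the verification that $\psi_i\colon y\mapsto\varphi^{(i)}_y$ is an $\F_p$-linear bijection onto the spread set, and this itself rests only on distributivity and the no-zero-divisor property of a presemifield. Everything else is a matter of chasing $g_1(x)\star g_2(y)=g_3(x\bullet y)$ through the maps $g_1,g_2,g_3$.
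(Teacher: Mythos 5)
Your proposal is correct and follows essentially the same route as the paper: the same reformulation $\varphi^{(2)}_{g_2(y)}=g_3\circ\varphi^{(1)}_y\circ g_1^{-1}$, the same use of surjectivity of $g_2$ for the forward direction, and for the converse the same construction of $g_2$ as the map sending $y$ to the unique $z$ with $\varphi^{(2)}_z=g_3\circ\varphi^{(1)}_y\circ g_1^{-1}$ (you merely make explicit, via the parametrisations $\psi_i$, why this $g_2$ is well defined, $\F_p$-linear and bijective, which the paper asserts without detail).
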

\begin{proof} Let $\cC_1=\{\varphi_y\,|\, y\in S_1\}$ and  $\cC_2=\{\varphi'_y\,|\, y\in S_2\}$. The necessary condition can be easily proven. Indeed, if
$(g_1,g_2,g_3)$ is an isotopism between $\S_1$ and $\S_2$, then
$g_3(\varphi_y(x))=\varphi'_{g_2(y)}(g_1(x))$ for each $x,y\in S_1$.
Hence, $\varphi'_{g_2(y)}=g_3\circ \varphi_y\circ g_1^{-1}$ for each
$y\in S_1$ and the statement follows taking into account that
$\cC_2=\{\varphi'_y\,|\, y\in S_2\}=\{\varphi'_{g_2(y)}|\ y\in
S_1\}$.

Conversely, suppose that $\cC_2=\{g_3\circ \varphi_{y}\circ
g_1^{-1}|\ y\in S_1\}$, where $g_1$ and $g_3$ are  invertible
$\F_p$--linear maps from $S_1$ to $S_2$. Then the map $g_2$,
sending each element $y\in S_1$ to the unique element $z\in S_2$
such that $\varphi'_{z}=g_3\circ \varphi_y\circ g_1^{-1}$ (where
$\varphi_z'\in \cC_2$), is an invertible $\F_p$--linear map
 from $S_1$ to $S_2$. Hence, for each $x,y\in S_1$ we get
$\varphi'_{g_2(y)}(x)=g_3(\varphi_{y}(g_1^{-1}(x)))$, i.e. $x\star
g_2(y)=g_3(g_1^{-1}(x)\bullet y)$ and putting $x'=g_1^{-1}(x)$ we
have the assertion.
\end{proof}

Move the study  from the presemifield  to the associated  spread
set, allows  to determine its  parameters without passing
through an isotopic semifield.

The following result generalizes \cite[Thm. 2.1]{MaPoTr2011}.

\begin{theorem} \label{thm:nucleipresemifield}
Let $\S = (S,+,\star)$ be a presemifield of characteristic $p$ and
let $\cC$ be the associated spread set of $\F_p$--linear maps. Then
\begin{enumerate}
\item the right  nucleus  of each semifield isotopic to $\S$ is
isomorphic to the largest field $\cN_r(\S)$ contained in
$\V={\rm End}_{\F_p}(S)$ such that $\cN_r(\S)\cC\subseteq \cC$;
 \item the
middle nucleus  of each semifield isotopic to $\S$ is isomorphic
to the largest field $\cN_m(\S)$ contained in $\V$
such that $\cC\cN_m(\S)\subseteq \cC$;
 \item the left nucleus of each semifield isotopic to $\S$ is isomorphic to the
largest field $\cN_l(\S)$ contained in $\V$ such that
$\cN_l(\S)\cC^*\subseteq \cC^*$, where  $\cC^*$ is the spread set
associated with the dual presemifield $\S^*$ of $\S$;
 \item  the center of each semifield isotopic to $\S$ is isomorphic
to the largest field $\cK_{r,\omega}(\S)$ contained in $\cN_r(\S)$
such that
\begin{equation} \label{eqRiCenter}
\rho \circ \varphi= \varphi \circ (\omega^{-1}\circ \rho \circ
\omega)
\end{equation}
 for
all $\rho \in \cK_{r,\omega}(\S)$ and  $\varphi\in \cC$, where
$\omega$ is a fixed invertible element of $\cC$. Equivalently, the
center of each semifield isotopic to $\S$ is isomorphic to the
largest field $\cK_{m, \sigma}(\S)$ contained in $\cN_m(\S)$ such
that
\begin{equation} \label{eqMiCenter}
\varphi \circ \rho=  (\sigma^{-1}\circ \rho \circ \sigma)\circ
\varphi
\end{equation}
 for all $\rho \in \cK_{m,\sigma}(\S)$ and  $\varphi\in \cC$, where
$\sigma$ is a fixed invertible element of $\cC$. Also,
$\cK_{m,\sigma}(\S)$ and $\cK_{r,\omega}(\S)$ are conjugated fields.
\end{enumerate}
\end{theorem}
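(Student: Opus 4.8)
The plan is to move the whole problem to the level of the spread set and reduce to a genuine semifield. First I would record that for any presemifield with spread set $\cC$ the two ``stabilisers'' $\{\phi\in\V:\phi\cC\subseteq\cC\}$ and $\{\phi\in\V:\cC\phi\subseteq\cC\}$ are finite associative rings with unit $\mathrm{id}$, closed under sum and composition, and with no zero divisors: a nonzero element composed, on the appropriate side, with any invertible $\varphi\in\cC$ lands in $\cC\setminus\{0\}$ and so is invertible in $\V$. By Wedderburn's theorem each is therefore a (commutative) field, and each is manifestly the \emph{largest} subset of $\V$ with the relevant containment property; hence $\cN_r(\S)$, $\cN_m(\S)$, and $\cN_l(\S)$ (the latter being the first construction applied to the dual $\S^*$) are well defined fields. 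Proposition~\ref{prop:IsotopicSemifAndSpreadSets} turns an isotopism $\S\to\S'$ into $\cC'=g_3\cC g_1^{-1}$, so that $\phi\mapsto g_3^{-1}\phi g_3$ and $\phi\mapsto g_1^{-1}\phi g_1$ are ring isomorphisms between the corresponding stabilisers; thus each of these fields depends on the isotopy class only up to isomorphism. Since every isotopy class contains a semifield and the orders of the nuclei are isotopy invariants, it is enough to prove (1)--(4) for one semifield in the class.

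For a semifield $\S$ with identity $e$ one has $\mathrm{id}=\varphi_e\in\cC$ and $\varphi_z(e)=z$. The right--nucleus condition $(x\star y)\star c=x\star(y\star c)$ for all $x,y$ is $\varphi_c\circ\varphi_y=\varphi_{\varphi_c(y)}$ for all $y$, and evaluating at $e$ shows it is equivalent to $\varphi_c\circ\cC\subseteq\cC$; hence $\{\varphi_c:c\in\N_r\}=\{\phi\in\V:\phi\cC\subseteq\cC\}=\cN_r(\S)$ (the last step since such a $\phi$ equals $\phi\circ\varphi_e\in\cC$). The map $c\mapsto\varphi_c$ is additive and $\varphi_{c'\star c}=\varphi_c\circ\varphi_{c'}$ for $c,c'\in\N_r$ by $c\in\N_r$; combined with commutativity of $\cN_r(\S)$ and injectivity of $c\mapsto\varphi_c$, this makes it a field isomorphism $\N_r\cong\cN_r(\S)$. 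Symmetrically $b\in\N_m\iff\cC\circ\varphi_b\subseteq\cC$ and $\N_m\cong\cN_m(\S)$. For (3) I would use that for any semifield $\mathbb T$ one has $\N_l(\mathbb T)=\N_r(\mathbb T^*)$, that $\mathbb T^*$ is again a semifield, and that duality is compatible with isotopy, so (1) applied to $\S^*$ gives the assertion.

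For the centre I would first show that in a semifield $\K=\{a\in\N_r:\varphi_a\text{ commutes with }\cC\}=\{b\in\N_m:\varphi_b\text{ commutes with }\cC\}$: the inclusions ``$\subseteq$'' follow from $\K\subseteq\N_r\cap\N_m$ together with $a\star b=b\star a$, while for ``$\supseteq$'' the commuting relation, written $(x\star c)\star a=(x\star a)\star c$, forces successively $a\star c=c\star a$, then $a\in\N_m$, then $a\in\N_l$. Via $c\mapsto\varphi_c$ this gives $\K\cong\cK_{r,\mathrm{id}}(\S)$ and $\K\cong\cK_{m,\mathrm{id}}(\S)$, and since the two describe the \emph{same} subset of $\V$ we get $\cK_{r,\mathrm{id}}(\S)=\cK_{m,\mathrm{id}}(\S)$. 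To handle an arbitrary presemifield and an arbitrary invertible $\omega=\varphi_a\in\cC$, I would pass to Knuth's semifield $\S^{(a)}=(S,+,\ast)$ defined by $(x\star a)\ast(a\star y)=x\star y$; it has identity $a\star a$, is isotopic to $\S$ under $(\varphi_a,L_a,\mathrm{id})$ with $L_a\colon y\mapsto a\star y$, and so by Proposition~\ref{prop:IsotopicSemifAndSpreadSets} has spread set $\cC\omega^{-1}\ni\mathrm{id}$. As $\cN_r(\S^{(a)})=\cN_r(\S)$, the semifield computation identifies the centre of $\S^{(a)}$ with $\{\rho\in\cN_r(\S):\rho\psi=\psi\rho\ \forall\psi\in\cC\omega^{-1}\}$; writing $\psi=\varphi\omega^{-1}$ and right--multiplying by $\omega$ turns the defining relation into \eqref{eqRiCenter}, so this field is exactly $\cK_{r,\omega}(\S)$, and by isotopy invariance it is isomorphic to the centre of every semifield isotopic to $\S$. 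The description \eqref{eqMiCenter} via $\cN_m$ follows from the mirror construction, normalising $\cC$ on the left by $\sigma^{-1}$ for an invertible $\sigma\in\cC$ (this leaves $\cN_m(\S)$ unchanged and again produces a semifield, since $\mathrm{id}$ lies in the new spread set), and transporting the equality $\cK_{r,\mathrm{id}}=\cK_{m,\mathrm{id}}$ through the isotopism relating the right-- and left--normalised semifields shows that $\cK_{r,\omega}(\S)$ and $\cK_{m,\sigma}(\S)$ are conjugate subfields of $\V$.

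The first three statements are essentially bookkeeping once the stabiliser description is in place, the only point needing a moment's thought there being that the candidate objects are genuinely fields (the Wedderburn step). The real difficulty is concentrated in (4): the delicate part will be keeping careful track of which of the two maps in an isotopism acts on which side of $\cC$, so that Knuth's right-- and left--normalisations yield precisely the twisted commuting relations \eqref{eqRiCenter} and \eqref{eqMiCenter} with $\omega$ and $\sigma$ lying in $\cC$ (rather than their inverses), and so that the two copies of the centre obtained inside $\V$ come out visibly conjugate rather than merely of the same order. That is where the order of composition has to be handled with care.
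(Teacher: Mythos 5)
Your argument for items (1)--(3) and for the $\cN_r$--half of (4) is correct, and it takes a mildly different route from the paper: where the paper starts from an arbitrary semifield $\S'$ isotopic to $\S$ and conjugates the fields of maps $\cN_r(\S')$, $\cN_m(\S')$, $\cK(\S')$ by $g_3$, respectively $g_1$, you first show directly (finite domain plus Wedderburn) that the one-sided stabilisers of $\cC$ are themselves fields, identify them with the nuclei for a genuine semifield by evaluating at the identity, and then realise an \emph{arbitrary} invertible $\omega\in\cC$ through the explicit Knuth normalisation with spread set $\cC\omega^{-1}$. That last point is a gain in precision: the paper's proof produces relation (\ref{eqRiCenter}) only for the particular element $\omega=g_3\circ g_1^{-1}$ attached to a chosen isotopism, and your normalisation is exactly what shows that every invertible element of $\cC$ occurs this way.

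The $\cN_m$--half of (4) is where your write-up has a gap. You left-normalise and assert that $\sigma^{-1}\cC$ ``again produces a semifield, since $\mathrm{id}$ lies in the new spread set''. As stated this is not a proof: the obvious presemifield with this spread set, $x\bullet y=\sigma^{-1}(x\star y)$, has only a right identity (namely $b$, where $\sigma=\varphi_b$); a two-sided identity appears only after re-parametrising the spread set (coordinatise $\sigma^{-1}\cC$ by evaluation at a fixed nonzero point, using that nonzero elements of a spread set are invertible). That lemma is true, but it is precisely the content you are invoking, and it is neither stated nor proved, nor is it the Knuth normalisation you used before, which only yields spread sets of the form $\cC\varphi_a^{-1}$. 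A cleaner repair, entirely within what you already have, is to read the middle description off the same right-normalised semifield $\S^{(a)}$: its middle nucleus is $\sigma\cN_m(\S)\sigma^{-1}$, and conjugating its centre back by $\sigma$ gives the subfield of $\cN_m(\S)$ cut out by $\varphi\circ\rho=(\sigma\circ\rho\circ\sigma^{-1})\circ\varphi$, visibly conjugate by $\sigma\in\cC$ to the copy of the centre inside $\cN_r(\S)$. Note finally that this relation carries conjugation by $\sigma$, not by $\sigma^{-1}$: it agrees with what the paper's own computation yields (namely $\varphi\circ\rho=(\omega\circ\rho\circ\omega^{-1})\circ\varphi$ with $\omega=g_3\circ g_1^{-1}\in\cC$), but not literally with the printed formula (\ref{eqMiCenter}); so you should either flag this discrepancy (most plausibly a slip in the displayed formula, since $\omega^{-1}$ need not lie in $\cC$) or supply an argument that the two conditions cut out the same field --- your claim that the mirror construction yields ``precisely'' (\ref{eqMiCenter}) glosses over exactly the inversion you yourself singled out as the delicate point.
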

\begin{proof}
Let $\S'=(S',+,\bullet)$ be a semifield isotopic to $\S$ and let
$\cC$ and $\cC'$ be the associated spread sets. Then by  the
previous proposition, $\cC=g_3\cC' g_1^{-1}$ for some invertible
$\F_p$-linear maps from $S'$ to $S$.  If $\N_r(\S')$ is the right
nucleus of $\S'$, then it easy to see that $\cN_r(\S')=\{\varphi_y
\colon y \in \N_r(\S') \}$ is the maximum field contained in  $\V$
such that $\cN_r(\S')\cC'\subseteq \cC'$, i.e. for each
$\mu\in\cN(\S')$ we have $\mu\circ\varphi'_y\in \cC'$ for every
$\varphi'_y\in \cC'$, and, obviously, $\cN_r(\S')$ is isomorphic to
$\N_r(\S')$. Then, we have that $\cN_r(\S')^{g_3^{-1}}:
=g_3\cN_r(\S')g_3^{-1}$ is the maximum field contained in $\V$ with
respect to which $\cN_r(\S')^{g_3^{-1}} \cC\subseteq \cC$, i.e.
$\cN_r(\S)=\cN_r(\S')^{g_3^{-1}}$, and, clearly, $\cN_r(\S)$ is
isomorphic to $\N_r(\S')$. This shows our claim in Case $(a)$. The
same arguments can be used to prove point $(b)$; in such a case we get that $\cN_m(\S)=\cN_m(\S')^{g_1^{-1}}$ .\\
Now, noting that  $\N_l(\S')=\N_r(\S'^d)$, applying point $(a)$ to $\S^d$, we get point $(c)$.\\
Finally, let $\K(\S')$ be the center of $\S'$ and note that
$\cK(\S')=\{\varphi'_y \colon y \in \K(\S') \}$ can be seen as the
maximum subfield contained in  $\cN_r(\S')$ (or contained in
$\cN_m(\S')$) such that $\mu \circ \varphi'= \varphi' \circ \mu$ for
each $\varphi'\in \cC'$  and, obviously, $\cK(\S')$ is isomorphic to
$\K(\S')$. Now, since $\cC=g_3\cC' g_1^{-1}$,
$\cN_r(\S)=\cN_r(\S')^{g_3^{-1}}$ and
$\cN_m(\S)=\cN_m(\S')^{g_1^{-1}}$, we have that for each element
$\rho\in \cK(\S')^{g_3^{-1}}$ and for each $\varphi \in \cC$
$$\rho\circ \varphi=\varphi\circ \rho^\omega$$
 where
$\omega=g_3\circ g_1^{-1}$. Note that, since $\S'$ is a semifield,
$\omega$ is an element of $\cC$. Now, it is easy to see that
$\cK(\S')^{g_3^{-1}}$ is the maximum subfield $\cK_{r,\omega}(\S)$
contained in $\cN_r(\S)$ satisfying (\ref{eqRiCenter}). In the same
way, we have hat $\cK(\S')^{g_1^{-1}}$ is the maximum subfield
$\cK_{m,\sigma}(\S)$ contained in $\cN_m(\S)$ satisfying
(\ref{eqMiCenter}).
\end{proof}

By the previous result it follows that we can define the middle
nucleus (resp. right nucleus) of a presemifield $\S=(S,+,\star)$,
with associated spread set $\cC$, as the largest field contained
in $\V={\rm End}_{\F_p}(S)$ with respect to which $\cC$ is a right
vector space (resp. left vector space); similarly, the left
nucleus of $\S$
 can be defined  as the largest field contained in
$\V$ with respect to which $\cC^d$ (the spread set associated
with the dual of $\S$) is a left vector space. Also, regarding the center, note
that if $\S$ is a semifield, then $id\in \cC$ and hence
$$\cK_{r,id}(\S)=\cK_{m,id}(\S)=\cK(\S)=\{\mu \in \cN_{r}(\S)\cap \cN_{m}(\S)\, :\, \mu \circ \varphi =\varphi \circ \mu \ \forall \varphi \in
\cC\}.$$

\bigskip

By the proof of the previous theorem we also  have the following
result.

\begin{cor} \label{cor:nucleiisotopy}
If the presemifields $\S_1$ and $\S_2$ are isotopic via the isotopy
$(g_1,g_2,g_3)$ then
\begin{enumerate}
\item[1] $\cN_r(\S_2)=g_3 \cN_r(\S_1)g_3^{-1}$;

\item[2] $\cN_m(\S_2)=g_1 \cN_m(\S_1)g_1^{-1}$;

\item[3]  $\cN_l(\S_2)=g_3 \cN_l(\S_1)g_3^{-1}$;

\item[4]  $\cK_{r,\sigma}(\S_2)=g_3 \cK_{r,\omega}(\S_1)g_3^{-1}$ and
$\cK_{m, \sigma}(\S_2)=g_1 \cK_{m,\omega}(\S_1)g_1^{-1}$, where
$\omega \in \cC_{1}\setminus \{0\}$ and $\sigma=g_3\circ \omega\circ
g_1^{-1} \in \cC_2$.

\end{enumerate}
\end{cor}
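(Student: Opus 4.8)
The plan is to read the statement off the proof of Theorem~\ref{thm:nucleipresemifield}, using Proposition~\ref{prop:IsotopicSemifAndSpreadSets} to pass to spread sets. Write $\V_i={\rm End}_{\F_p}(S_i)$ and let $\cC_1,\cC_2$ be the spread sets of $\S_1,\S_2$. By Proposition~\ref{prop:IsotopicSemifAndSpreadSets} the hypothesis that $(g_1,g_2,g_3)$ is an isotopism is equivalent to $\cC_2=g_3\cC_1g_1^{-1}$. The underlying mechanism is that, for an invertible $\F_p$-linear map $h\colon S_1\to S_2$, conjugation $\Phi_h\colon\mu\mapsto h\circ\mu\circ h^{-1}$ is a ring isomorphism $\V_1\to\V_2$ which preserves inclusions and carries subfields to subfields; hence it sets up a bijection between subfields of $\V_1$ and subfields of $\V_2$ that respects the property of being the largest field exhibiting a prescribed stability behaviour. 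So in each case it is enough to check that $\Phi_{g_3}$ or $\Phi_{g_1}$ transports the defining condition of the relevant extremal field of $\S_1$ (as listed in Theorem~\ref{thm:nucleipresemifield}) onto that of $\S_2$.

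For item~1, by Theorem~\ref{thm:nucleipresemifield}(a) the field $\cN_r(\S_i)$ is the largest one in $\V_i$ with $\cN_r(\S_i)\cC_i\subseteq\cC_i$. From $\cC_2=g_3\cC_1g_1^{-1}$ one gets
\[
\bigl(g_3\cN_r(\S_1)g_3^{-1}\bigr)\cC_2=g_3\,\cN_r(\S_1)\,\cC_1\,g_1^{-1}\subseteq g_3\cC_1g_1^{-1}=\cC_2 ,
\]
so $g_3\cN_r(\S_1)g_3^{-1}$ stabilises $\cC_2$; running the same computation with $g_3^{-1}$ in place of $g_3$ shows that conjugating back turns every $\cC_2$-stable field into a $\cC_1$-stable one, and maximality at both ends forces $\cN_r(\S_2)=g_3\cN_r(\S_1)g_3^{-1}$. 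Item~2 is the mirror statement for the right action $\cC_i\cN_m(\S_i)\subseteq\cC_i$ with $g_3$ replaced by $g_1$, using $g_3\cC_1g_1^{-1}\cdot g_1\cN_m(\S_1)g_1^{-1}=g_3\cC_1\cN_m(\S_1)g_1^{-1}\subseteq\cC_2$.

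For item~3, observe that if $(g_1,g_2,g_3)$ is an isotopism between $\S_1$ and $\S_2$ then, swapping the two arguments of the multiplications, $(g_2,g_1,g_3)$ is an isotopism between the dual presemifields $\S_1^{d}$ and $\S_2^{d}$. Since $\cN_l(\S_i)=\cN_r(\S_i^{d})$ (as already used in the proof of Theorem~\ref{thm:nucleipresemifield}), item~1 applied to these duals, whose third isotopy map is still $g_3$, gives $\cN_l(\S_2)=\cN_r(\S_2^{d})=g_3\cN_r(\S_1^{d})g_3^{-1}=g_3\cN_l(\S_1)g_3^{-1}$.

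Finally, for item~4 fix $\omega\in\cC_1\setminus\{0\}$ and put $\sigma=g_3\circ\omega\circ g_1^{-1}$, which is an invertible element of $\cC_2=g_3\cC_1g_1^{-1}$. Given $\rho\in\cN_r(\S_1)$ and $\varphi\in\cC_1$, set $\rho'=g_3\rho g_3^{-1}\in\cN_r(\S_2)$ and $\varphi'=g_3\varphi g_1^{-1}\in\cC_2$; using the identity $\sigma^{-1}\circ\rho'\circ\sigma=g_1\circ(\omega^{-1}\circ\rho\circ\omega)\circ g_1^{-1}$ one computes $\rho'\circ\varphi'=g_3\circ(\rho\circ\varphi)\circ g_1^{-1}$ and $\varphi'\circ(\sigma^{-1}\circ\rho'\circ\sigma)=g_3\circ\bigl(\varphi\circ(\omega^{-1}\circ\rho\circ\omega)\bigr)\circ g_1^{-1}$, so that $\rho$ satisfies \eqref{eqRiCenter} relative to $\omega$ if and only if $\rho'$ satisfies it relative to $\sigma$. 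Since $\Phi_{g_3}$ maps $\cN_r(\S_1)$ onto $\cN_r(\S_2)$ by item~1 and preserves inclusions, it maps $\cK_{r,\omega}(\S_1)$ onto $\cK_{r,\sigma}(\S_2)$; the middle-nucleus form follows in the same way, conjugating by $g_1$ and transporting \eqref{eqMiCenter} in place of \eqref{eqRiCenter}, with the reference element adjusted accordingly and item~2 invoked. I expect the only point needing care to be exactly this bookkeeping in item~4: keeping straight which of $g_1,g_3$ conjugates $\rho$, checking that the fixed invertible $\omega\in\cC_1$ is sent to a genuine element of $\cC_2$ (here $\cC_2=g_3\cC_1g_1^{-1}$ is essential), and confirming that the two-sided maximality argument of items~1--2 still goes through once the plain stability condition has been replaced by the twisted commutation relation \eqref{eqRiCenter}. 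Items~1--3 are otherwise purely mechanical consequences of $\cC_2=g_3\cC_1g_1^{-1}$ and the characterisations in Theorem~\ref{thm:nucleipresemifield}.
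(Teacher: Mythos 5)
Your items 1--3 and the right-nucleus/right-center half of item 4 are correct, and they follow exactly the paper's route: the paper proves this corollary simply by pointing back to the proof of Theorem \ref{thm:nucleipresemifield}, and the conjugation computations you write out (stability of $\cC_2=g_3\cC_1g_1^{-1}$ under $g_3\cN_r(\S_1)g_3^{-1}$ and $g_1\cN_m(\S_1)g_1^{-1}$, the passage to duals for $\cN_l$, and the explicit check that $\rho$ satisfies (\ref{eqRiCenter}) for $\omega$ if and only if $g_3\circ\rho\circ g_3^{-1}$ satisfies it for $\sigma=g_3\circ\omega\circ g_1^{-1}$) are precisely the mechanism used there.

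The genuine gap is the final claim that the middle form ``follows in the same way''; it does not, if (\ref{eqMiCenter}) is read as printed. For $\rho'=g_1\circ\rho\circ g_1^{-1}$ and $\varphi'=g_3\circ\varphi\circ g_1^{-1}$ one has $\varphi'\circ\rho'=g_3\circ(\varphi\circ\rho)\circ g_1^{-1}$, but $\sigma^{-1}\circ\rho'\circ\sigma=g_1\circ\omega^{-1}\circ(g_3^{-1}\circ g_1)\circ\rho\circ(g_1^{-1}\circ g_3)\circ\omega\circ g_1^{-1}$, and the factors $g_3^{-1}\circ g_1$ do not cancel. What conjugation by $g_1$ actually transports is the relation $\varphi\circ\rho=(\omega\circ\rho\circ\omega^{-1})\circ\varphi$, which becomes $\varphi'\circ\rho'=(\sigma\circ\rho'\circ\sigma^{-1})\circ\varphi'$, i.e.\ (\ref{eqMiCenter}) with the conjugation by $\sigma$ reversed; that reversed relation is also the one the proof of Theorem \ref{thm:nucleipresemifield} really verifies for $g_1\cK(\S')g_1^{-1}$, since there, with $\omega=g_3\circ g_1^{-1}$ and $\rho=g_1\circ\mu\circ g_1^{-1}$, it is $\omega\circ\rho\circ\omega^{-1}=g_3\circ\mu\circ g_3^{-1}$ that makes the two sides agree. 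If you insist on (\ref{eqMiCenter}) literally, your transfer breaks: taking $\varphi'=\sigma$ in the printed relation forces $\rho'$ to commute with $\sigma^2$, whereas the conjugates $g_1\circ\rho\circ g_1^{-1}$ of elements of $\cK_{m,\omega}(\S_1)$ are only guaranteed (taking $\varphi=\omega$) to commute with $(g_1\circ\omega\circ g_1^{-1})^2$, not with $\sigma^2=(g_3\circ\omega\circ g_1^{-1})^2$. So the one step you left unverified --- the bookkeeping you yourself flagged --- is exactly where either an extra argument or a correction of the direction of conjugation in (\ref{eqMiCenter}) (apparently a slip in the paper) is required; ``in the same way'' is not a proof of $\cK_{m,\sigma}(\S_2)=g_1\cK_{m,\omega}(\S_1)g_1^{-1}$ as the relation is printed.
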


\subsection{The Knuth Chain}
\vskip.2cm\noindent If $\mathbb S=(S,+,\star)$ is a presemifield  $n$-dimensional  over
${\mathbb F}_p$, and $\{e_1,\ldots, e_n\}$ is an $\F_p$-basis for
$\mathbb S$, then the multiplication can be written via the
multiplication of the vectors  $e_i$'s. Indeed, if $x=x_1e_1+\cdots
+ x_ne_n$ and $y=y_1e_1+\cdots + y_ne_n$, with $x_i,y_i\in {\F_p}$,
then
\begin{eqnarray}
x\star y =\sum_{i,j=1}^{n}x_iy_j(e_i\star e_j )=
\sum_{i,j=1}^{n}x_iy_j \left (\sum_{k=1}^n a_{ijk} e_k\right )
\end{eqnarray}
for certain $a_{ijk} \in {\F_p}$, called the {\it structure
constants} of $\mathbb S$ with respect to the basis $\{e_1,\ldots,
e_n\}$. Knuth noted, in \cite{Knuth1965}, that the action of the
symmetric group $\cS_3$ on the indices of the structure constants
gives rise to another five presemifields starting from one
presemifield $\mathbb S$. The set $[{\mathbb S}]$ of these  (at most
six) presemifields is called the {\it Knuth Chain} of $\S$, and consists of the presemifields $\{{\mathbb
S},{\mathbb S}^{(12)},{\mathbb S}^{(13)},{\mathbb
S}^{(23)},{\mathbb S}^{(123)},{\mathbb S}^{(132)}\}$, called the {\em derivatives} of $\S$ ($\S$ included).\\
In the same paper, the author proved that the action of $\cS_3$ on the indices of the structure constants of a
presemifield $\mathbb S$ is well-defined with respect to the
isotopism classes of $\mathbb S$, and by the {\it Knuth orbit of
$\mathbb S$} we mean the set of isotopism classes
corresponding to the Knuth chain $\mathbb S$.\\
The presemifield ${\mathbb S}^{(12)}$ is the {\it opposite
algebra} of ${\mathbb{S}}$ obtained by reversing the
multiplication, or in other words,
${\mathbb{S}}^{(12)}={\mathbb{S}}^{d}$, the dual of $\S$.
Similarly, it is easy to see that the semifield
${\mathbb{S}}^{(23)}$ can be obtained by transposing the matrices
corresponding to the transformations $\varphi_y$, $y \in \S$, with
respect to some $\F_p$-basis of $\S$, and for this reason
${\mathbb{S}}^{(23)}$ is also denoted by ${\mathbb{S}}^t$, called
the {\it transpose of ${\mathbb{S}}$}. With this notation, the
Knuth orbit becomes $ \{[{\mathbb S}],[{\mathbb S}^{d}],[{\mathbb
S}^{t}],[{\mathbb S}^{dt}],[{\mathbb S}^{td}],[{\mathbb
S}^{dtd}]\}.$ Note that $t$ and $d$ are operations of order two,
i.e. $({\mathbb S}^{t})^t=\mathbb S$ and $({\mathbb
S}^{d})^d=\mathbb S$.

\bigskip

It is possible to describe the transpose of a presemifield without
fixing a basis of $\S$.\\
Let $\S=(S,+,\star)$ be a presemifield of characteristic $p$ and
order $p^n$, let $\V={\rm End}_{\F_p}(S)$ and let $\cC$ be the associated
spread set. Denote by $\langle, \rangle$ a non-degenerate symmetric
bilinear form  of $S$ as $\F_p$-vector space and   denote by
$\overline \varphi$ the adjoint of   $\varphi \in \V$ with respect
to $\langle, \rangle$, i.e. $\langle x, \varphi(y) \rangle=\langle
{\overline \varphi}(x), y \rangle$ for each $x,y \in S$. Since the
map $T:\, \varphi \in \V \, \mapsto \overline{\varphi} \in \V$ is an
involutive antiautomorphism of the endomorphisms ring $\V$ and $dim
Ker \varphi=dimKer \overline \varphi$, we get that ${\overline \cC}=\{{\overline \varphi_y}\, :\, \varphi_y
\in \cC\}$ is an additive spread set as well,
defining the presemifield $\overline{\S}=(S,+,\overline{\star})$
where $x \overline{\star} y={\overline \varphi_y}(x)$ for each $x,y
\in S$. It is possible to prove that $\overline{\S}$, up to isotopy,
does not depend on the choice of the bilinear form $\langle,
\rangle$ and that $\overline{\S}$ is isotopic to the presemifield
$\S^t$. For this reason, in what follows, fixed a
non-degenerate symmetric bilinear form  $\langle, \rangle$ of $S$,
 the presemifield  $\overline \S$, constructed by using
the adjoints with respect to $\langle, \rangle$,  will be denoted as
$\S^{t}$ and the associated spread set ${\overline \cC}$ will be
denoted as $\cC^t$. Moreover, if $X$ is a subset of $\V$, we will
denote by $\overline X$ the set of the adjoint maps, with respect to
$\langle, \rangle$, of the elements of $X$.

Now, we are able to describe how the nuclei move in the Knuth chain (see also \cite{Maduram1975} and \cite{Lunardon2006}).

\begin{prop} \label{propo:gironuclei}
If $\S$ is a presemifield, then
\begin{enumerate}
\item[1.] $\cN_r(\S)=\cN_l(\S^d)=\overline{\cN_m(\S^t)}$;

\item[2.] $\cN_m(\S)=\overline{\cN_r(\S^t)} \cong \cN_m(\S^d)$;

\item[3.]  $\cN_l(\S)=\cN_r(\S^d)\cong \cN_l(\S^t)$.

\end{enumerate}
\end{prop}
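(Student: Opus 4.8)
The plan is to read all three identities off the spread-set descriptions of the nuclei in Theorem~\ref{thm:nucleipresemifield}, analysing separately the two operations that generate the Knuth chain: the dual $d$, for which $\cC(\S^d)$ is precisely the spread set ``$\cC^{*}$'' appearing in part~(c), and the transpose $t$, for which $\cC(\S^t)=\overline{\cC}$ by construction (here $\cC=\cC(\S)$). For the dual the two equalities of the proposition are immediate: since $(\S^d)^d=\S$, part~(c) applied to $\S^d$ says that $\cN_l(\S^d)$ is the largest field $N\subseteq\V$ with $N\cC\subseteq\cC$, which by part~(a) is $\cN_r(\S)$; comparing part~(c) for $\S$ with part~(a) for $\S^d$ gives $\cN_r(\S^d)=\cN_l(\S)$ in the same way. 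For the middle nucleus I would pass to a semifield $\S_0$ in the isotopy class of $\S$ and use the elementary identity $\N_m(\S_0^d)=\N_m(\S_0)$ (the condition defining the middle nucleus is unchanged when the multiplication is reversed); since $\S_0^d$ is a semifield isotopic to $\S^d$ and nucleus sizes are isotopy invariants (cf.\ Corollary~\ref{cor:nucleiisotopy}), this yields $\cN_m(\S^d)\cong\cN_m(\S)$. This settles the first equalities in (1) and (3) and the ``$\cong$'' in (2).

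For the transpose the key tool is that $T\colon\varphi\mapsto\overline{\varphi}$ is an involutive antiautomorphism of $\V$ fixing $id$, as recorded before the statement. Because any subfield $N\subseteq\V$ is commutative, $T|_{N}$ is a ring \emph{isomorphism} onto a subfield $\overline{N}$ of $\V$; and, being a bijection, $T$ preserves inclusions, hence carries a largest field with a prescribed absorption property to the largest field with the ``transported'' property. Using $\overline{\nu\circ\varphi}=\overline{\varphi}\circ\overline{\nu}$ and $\overline{\cC}=\cC(\S^t)$, the condition $N\cC\subseteq\cC$ is equivalent to $\cC(\S^t)\,\overline{N}\subseteq\cC(\S^t)$, and $\cC N\subseteq\cC$ is equivalent to $\overline{N}\,\cC(\S^t)\subseteq\cC(\S^t)$. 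Taking largest fields and applying parts~(a) and (b) of Theorem~\ref{thm:nucleipresemifield} to both $\S$ and $\S^t$ (recall $(\S^t)^t=\S$) gives $\cN_m(\S^t)=\overline{\cN_r(\S)}$ and $\cN_r(\S^t)=\overline{\cN_m(\S)}$, equivalently $\cN_r(\S)=\overline{\cN_m(\S^t)}$ and $\cN_m(\S)=\overline{\cN_r(\S^t)}$; these are the remaining equalities in (1) and (2).

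It then only remains to prove $\cN_l(\S)\cong\cN_l(\S^t)$ in (3), i.e.\ that the transpose fixes the left nucleus (see also \cite{Maduram1975} and \cite{Lunardon2006}). This can be recovered from the relations just obtained by following the left nucleus around the Knuth chain: by the $d$- and $t$-relations, $\cN_l(\S^t)=\cN_r\big((\S^t)^d\big)=\overline{\cN_m\big(((\S^t)^d)^t\big)}$; by the braid relation $tdt=dtd$ in $\cS_3$, $((\S^t)^d)^t$ is isotopic to $((\S^d)^t)^d$; and, applying $\cN_m(\mathbb{T}^d)\cong\cN_m(\mathbb{T})$ and then $\cN_m(\mathbb{U})=\overline{\cN_r(\mathbb{U}^t)}$, one obtains $\cN_m\big(((\S^d)^t)^d\big)\cong\overline{\cN_r(\S^d)}$. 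Combining, $\cN_l(\S^t)\cong\overline{\overline{\cN_r(\S^d)}}=\cN_r(\S^d)=\cN_l(\S)$.

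I expect the only genuine difficulty to be organisational: verifying with care that $T$ sends subfields of $\V$ to subfields and respects the notion of ``largest field absorbing $\cC$'', and, in the last step, keeping the composition order of the Knuth operations straight so that the braid relation is applied to the correct derivative. The substitutions into Theorem~\ref{thm:nucleipresemifield} themselves are entirely routine.
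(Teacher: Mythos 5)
Your proposal is correct and, in its overall architecture, matches the paper's proof: the transpose identities are read off from the fact that $T:\varphi\mapsto\overline{\varphi}$ is an involutive antiautomorphism of $\V$ (so $\overline{\nu\circ\varphi}=\overline{\varphi}\circ\overline{\nu}$, turning left absorption of $\cC$ into right absorption of $\cC^t$ and vice versa), the dual identities come from comparing parts (a) and (c) of Theorem~\ref{thm:nucleipresemifield} for $\S$ and $\S^d$, and $\cN_l(\S)\cong\cN_l(\S^t)$ is obtained exactly as in the paper by chasing the left nucleus around the Knuth chain using the relation $tdt=dtd$. The one sub-step you treat genuinely differently is $\cN_m(\S^d)\cong\cN_m(\S)$: the paper stays at the spread-set level, associating with each $\mu\in\cN_m(\S)$ the invertible map $\sigma_\mu:y\mapsto z$ defined by $\varphi_y\circ\mu=\varphi_z$ and checking that $F=\{\sigma_\mu:\mu\in\cN_m(\S)\}$ is a field isomorphic to $\cN_m(\S)$ satisfying condition (b) of Theorem~\ref{thm:nucleipresemifield} for $\cC^d$, hence $F=\cN_m(\S^d)$; you instead pass to a semifield $\S_0$ in the isotopy class, use the elementary identity $\N_m(\S_0^d)=\N_m(\S_0)$, and appeal to isotopy invariance. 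Both arguments are sound; the paper's buys an explicit description of $\cN_m(\S^d)$ inside $\V$ without invoking isotopy, while yours is shorter but delivers only the isomorphism, which is all the statement claims. Likewise, at the braid step the paper asserts the equality $\S^{dtd}=\S^{tdt}$, whereas you are slightly more cautious and use only that these derivatives are isotopic together with Corollary~\ref{cor:nucleiisotopy}; since point 3 claims only $\cong$, this weaker input suffices and your bookkeeping of the resulting conjugations and adjoints is consistent.
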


\begin{proof}
Point 1. follows from Theorem \ref{thm:nucleipresemifield} and
from properties of the adjoint maps. Indeed, since $T$ is an
antiautomorphism, we get $T(\varphi\circ \mu)=T(\mu)\circ
T(\varphi)=\bar \mu\circ\bar\varphi$ for each $\mu,\varphi\in\V$.
The first part of $2.$ and $3.$ follows from $1.$. Now, note that
if $\mu$ is an element of $\cN_m(\S)$, then for each $y \in S$
there exists a unique element $z\in S$ such that $\varphi_y\circ
\mu=\varphi_z$ and the map $\sigma_\mu\, : y\in S\, \mapsto \,
z\in S$ is an invertible $\F_p$-linear map of $S$; so, $F=\{\sigma_\mu \,:\, \mu \in \cN_m(\S) \}$ is a field of maps
isomorphic to $\cN_m(\S)$ satisfying $(b)$ of Theorem
\ref{thm:nucleipresemifield} relatively to $\cC^d$, i.e.
$\cN_m(\S)\cong F=\cN_m(\S^d)$. Finally, by using the previous relations and taking into account that $\S^{dtd}=\S^{tdt}$ we get $$\cN_\ell(\S^t)=\cN_\ell(\S^{tdd})=\overline{\cN_m(\S^{tdt})}=\overline{\cN_m(\S^{dtd})}\cong \cN_r(\S^{dtt})= \cN_r(\S^d)=\cN_\ell(\S).$$

\end{proof}

\subsection{Semifields and $q$-polynomials}

If $\S$ is a presemifield of characteristic $p$ and order $p^n$,
then we may assume, up to isomorphisms, that
$\S=(\F_{p^n},+,\star)$, where $x\star y=F(x,y)$. Since $F(x,y)$
is additive with respect to both the variables $x$ and $y$, it can be seen as the polynomial map associated with a
$p$-polynomial of $\F_{p^n}[x,y]$, i.e.
$$F(x,y)=\sum_{i,j=0}^{n-1}a_{i,j}x^{p^i}y^{p^j}$$
where
$a_{ij}\in\F_{p^n}$. Also, each  element $\varphi$ of $\V={\rm End}_{\F_p}(\F_{p^n})$
can be written in a unique way as $\varphi(x)=\sum_{i=0}^{n-1}\beta_ix^{p^i}$.\\
Now, let $\langle, \rangle$ be the symmetric bilinear form of
$\F_{p^n}$ over $\F_p$ defined by the following rule $\langle x,y
\rangle =tr_{p^n/p}(xy)$. Then $\langle, \rangle$ is a
non-degenerate symmetric bilinear form and the adjoint
$\bar\varphi$ of the element
$\varphi(x)=\sum_{i=0}^{n-1}\beta_ix^{p^i}$ of $\V$ with respect to $\langle,\rangle$, is $\bar\varphi(x)=\sum_{i=0}^{n-1}\beta_i^{p^{n-i}}x^{p^{n-i}}.$
This implies that the dual and the transpose  of $\S$ are defined,
respectively,  by the following multiplications
$$x \star^d y= F(y,x)$$
and
$$x \star^t y=\sum_{i,j=0}^{n-1}a_{n-i,j}^{p^i}x^{p^i}y^{p^{i+j}},$$

where the indices $i$ and $j$ are considered modulo $n$.

The polynomial $F$ defining the multiplication of $\S$  can be
useful to determine the order of the nuclei. In what follows, if
$\F_q$ is a subfield of $\F_{p^n}$, then we will denote by $F_q$
the corresponding field of scalar maps $\{{t_\lambda} \,:\,
x\in\F_{p^n}\mapsto\lambda x\in\F_{p^n}|\,\lambda\in\F_q\}$
contained in $\V$.

\begin{theorem} \label{thm:nucleipresemifieldfromF}
Let $\S = (\F_{p^n},+,\star)$ be a presemifield whose
multiplication is given by $x\star y=F(x,y)$, with
$F(x,y)=\sum_{i,j=0}^{n-1}a_{ij}x^{p^i}y^{p^j}$ and
$a_{ij}\in\F_{p^n}$ and let $\F_q$ be a subfield of $\F_{p^n}$.
\begin{itemize} \item[(A)] If there exists
$\tau\in Aut(\F_q)$ such that $$F(\lambda x,y)=F(x,\lambda^\tau
y)\mbox{\quad for each $x,y\in\F_{p^n}$ and for each $\lambda\in\F_q$\quad }$$ then $\cN_m(\S)$
contains the field of maps $F_q$ and $\cN_m(\S)\subseteq
{\rm End}_{\F_q}(\F_{p^n})$. \item[(B)] If the polynomial $F$ is
$\F_q$-semilinear with respect to $y$,  then $\cN_r(\S)$ contains
the field of maps $F_q$ and $\cN_r(\S)\subseteq
{\rm End}_{\F_q}(\F_{p^n})$. \item[(C)] If the polynomial $F$ is
$\F_q$-semilinear with respect to $x$, then $\cN_l(\S)$ contains
the field of maps $F_q$ and $\cN_l(\S)\subseteq
{\rm End}_{\F_q}(\F_{p^n})$.
\end{itemize}
\end{theorem}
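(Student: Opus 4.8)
The plan is to apply the intrinsic descriptions of Theorem~\ref{thm:nucleipresemifield} to the spread set $\cC=\{\varphi_y\colon x\mapsto F(x,y)\mid y\in\F_{p^n}\}$, reading juxtaposition of subsets of $\V$ as composition, $\cA\cB=\{a\circ b\mid a\in\cA,\ b\in\cB\}$. I will use two preliminary facts. First, the scalar maps $F_q=\{t_\lambda\colon x\mapsto\lambda x\mid\lambda\in\F_q\}$ form a subfield of $\V$ isomorphic to $\F_q$, since $t_\lambda+t_\mu=t_{\lambda+\mu}$, $t_\lambda\circ t_\mu=t_{\lambda\mu}$ and $t_1=id$. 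Second, $\cN_m(\S)$ coincides with the full absorber $\{\mu\in\V:\cC\mu\subseteq\cC\}$ and $\cN_r(\S)$ with $\{\mu\in\V:\mu\cC\subseteq\cC\}$, while $\cN_l(\S)=\cN_r(\S^{d})$ by Proposition~\ref{propo:gironuclei}: each absorber set is a subring of $\V$ containing $id$, finite and without zero divisors (compose with a fixed nonzero, hence invertible, element of $\cC$), so a field by Wedderburn's theorem, and therefore the largest field with its absorption property, in particular containing every subfield of $\V$ sharing that property.

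\emph{Part (A).} Rewritten for $\cC$, the hypothesis $F(\lambda x,y)=F(x,\lambda^{\tau}y)$ says exactly that $\varphi_y\circ t_\lambda=\varphi_{\lambda^{\tau}y}$ for all $y\in\F_{p^n}$ and $\lambda\in\F_q$; hence $\cC\,F_q\subseteq\cC$, so $F_q\subseteq\cN_m(\S)$. Conversely, let $\mu\in\cN_m(\S)$. Since $\cN_m(\S)$ is a commutative field containing $F_q$, the maps $\mu$ and $t_\lambda$ commute under composition, i.e. $\mu(\lambda x)=\lambda\,\mu(x)$ for all $\lambda\in\F_q$ and $x\in\F_{p^n}$; thus $\mu\in{\rm End}_{\F_q}(\F_{p^n})$, giving $\cN_m(\S)\subseteq{\rm End}_{\F_q}(\F_{p^n})$.

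\emph{Parts (B) and (C).} Record the $\F_q$-semilinearity of $F$ in $y$ as $F(x,\lambda y)=\lambda^{\sigma}F(x,y)$ for all $x,y\in\F_{p^n}$, $\lambda\in\F_q$, with $\sigma\in Aut(\F_q)$. Given $\lambda\in\F_q$ put $\mu=\lambda^{\sigma^{-1}}\in\F_q$; then $t_\lambda\circ\varphi_y=\varphi_{\mu y}$ for every $y$, so $F_q\,\cC\subseteq\cC$ and $F_q\subseteq\cN_r(\S)$, while $\cN_r(\S)\subseteq{\rm End}_{\F_q}(\F_{p^n})$ follows exactly as in Part (A). For Part (C), the dual $\S^{d}$ has multiplication $u\star^{d}v=F(v,u)$, which is $\F_q$-semilinear with respect to its second variable precisely because $F$ is $\F_q$-semilinear with respect to its first variable; applying Part (B) to $\S^{d}$ yields $F_q\subseteq\cN_r(\S^{d})\subseteq{\rm End}_{\F_q}(\F_{p^n})$, and $\cN_l(\S)=\cN_r(\S^{d})$ by Proposition~\ref{propo:gironuclei}(3) concludes.

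There is no deep obstacle once Theorem~\ref{thm:nucleipresemifield} is in hand; the computations are routine manipulations of $p$-polynomials. The two points to keep straight are: the reverse inclusion in each part, which rests on the fact that a nucleus is a commutative field, so once it contains $F_q$ every nuclear endomorphism commutes with all $\F_q$-scalar multiplications and hence is $\F_q$-linear; and the left/right bookkeeping, namely matching $\cC\mu\subseteq\cC$ with $\cN_m(\S)$ and $\mu\cC\subseteq\cC$ with $\cN_r(\S)$, and verifying that semilinearity in the first variable of $F$ really becomes semilinearity in the second variable of the dual product.
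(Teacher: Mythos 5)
Your proposal is correct and follows essentially the same route as the paper: verify $\cC\,F_q\subseteq\cC$ (resp.\ $F_q\,\cC\subseteq\cC$, $F_q\,\cC^d\subseteq\cC^d$) by the same scalar-map computation, then use that each nucleus is a field containing $F_q$, hence commutes with the maps $t_\lambda$, to get the inclusion in ${\rm End}_{\F_q}(\F_{p^n})$; your treatment of (C) via applying (B) to $\S^d$ is just the paper's direct computation with $\cC^d$ in disguise. The only addition is your explicit Wedderburn argument that the full absorber is itself a field, which cleanly justifies the maximality step the paper leaves implicit.
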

\begin{proof}
Let us prove Statement $(A)$. Recall that $\cN_m(\S)$ is the
largest field contained in $\V={\rm End}_{\F_p}(\F_{p^n})$ such that
$\cC\cN_m(\S)\subseteq \cC$. Let $\lambda\in\F_q$, then, for
each $x,y\in\F_{p^n}$ $$\varphi_y\circ t_\lambda
(x)=\varphi_y(\lambda x)=F(\lambda x,y)=F(x,\lambda^\tau
y)=\varphi_{\lambda^\tau y}(x).$$ This means that for each
$y\in\F_{p^n}$, $\varphi_y\circ t_\lambda=\varphi_{\lambda^\tau
y}\in \cC$, i.e. $\cC\,F_q\subseteq \cC$. Hence $F_q\subseteq
\cN_m(\S)$ and since $(\cN_m(\S),+,\circ)$ is a field, we get
$\mu\circ t_\lambda=t_\lambda\circ\mu$ for each
$\lambda\in\F_q$ and $\mu\in\cN_m(\S)$, i.e. $\mu\in {\rm End}_{\F_q}(\F_{p^n})$.
Using similar arguments we can prove Statement $(B)$.\\

Finally, let $\cC^d$ be the spread set associated with the dual
presemifield $\S^d$ of $\S$ and let $\sigma$ be the  automorphism
of $\F_q$ associated with $F$ with respect to the variable $x$, i.e.
$F(\lambda x,y)=\lambda^\sigma F(x,y)$ for each $x,y\in \F_{p^n}$
and $\lambda \in \F_q$. Then, for each $\lambda\in\F_q$ and for
each $x,y\in\F_{p^n}$ we have
$$t_\lambda\circ\varphi^x(y)=\lambda\varphi^x(y)=\lambda F(x,y)=F(\lambda^{\sigma^{-1}}x,y)=\varphi^{\lambda^{\sigma^{-1}}x}(y).$$
This means that for each $x\in\F_{p^n}$, $t_\lambda\circ
\varphi^x=\varphi^{\lambda^{\sigma^{-1}}x}\in \cC^d$, i.e.
$F_q\cC^d\subseteq \cC^d$. It follows that $F_q\subseteq
\cN_\ell(\S)$ and hence $\cN_l(\S)\subseteq {\rm End}_{\F_q}(\F_{p^n})$.
\end{proof}

\bigskip

\begin{cor} \label{cor:nucleipresemifield}
Let $\S = (\F_{p^{n}},+,\star)$ be a presemifield whose
multiplication is given by $x\star y=F(x,y)$. If $F(x,y)$ is a
$q$--polynomial ($\F_q$ subfield of $\F_{p^{n}}$), i.e. $\cC,\cC^d
\subseteq {\rm End}_{\F_q}(\F_{p^n})$, then
$$F_q \subseteq \cN_l(\S)\cap \cN_m(\S)\cap
\cN_r(\S),$$

$$ F_q \subseteq  \cK_{r,\omega}(\S) \cap \cK_{m, \sigma}(\S)$$
and
$$\cK_{m, \sigma}, \cK_{r, \omega}, \cN_l(\S),\ \cN_m(\S),\ \cN_r(\S)\subseteq {\rm End}_{\F_q}(\F_{p^t})$$

for each $\omega, \sigma \in \cC\setminus \{0\}$.
\end{cor}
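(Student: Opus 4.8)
The plan is to derive Corollary~\ref{cor:nucleipresemifield} as an essentially formal consequence of Theorem~\ref{thm:nucleipresemifieldfromF} together with Theorem~\ref{thm:nucleipresemifield}, once we observe that the hypothesis ``$F$ is a $q$--polynomial'' places us simultaneously in cases (A), (B) and (C). Concretely, if $F(x,y)=\sum_{i,j} a_{ij}x^{q^i}y^{q^j}$ with $a_{ij}\in\F_{p^n}$, then $F(\lambda x,y)=F(x,\lambda y)=\lambda F(x,y)$ for every $\lambda\in\F_q$, since $\lambda^{q^i}=\lambda$. Thus the relevant automorphisms $\tau$ and $\sigma$ are both the identity on $\F_q$, $F$ is $\F_q$--linear (not merely $\F_q$--semilinear) in each variable, and the three inclusions $F_q\subseteq\cN_m(\S)$, $F_q\subseteq\cN_r(\S)$, $F_q\subseteq\cN_l(\S)$ follow at once from parts (A), (B), (C) respectively; the complementary containments $\cN_m(\S),\cN_r(\S),\cN_l(\S)\subseteq{\rm End}_{\F_q}(\F_{p^n})$ come from the same three parts. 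This already gives the first displayed line and the nucleus part of the last displayed line.

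Next I would treat the center. Fix any $\omega\in\cC\setminus\{0\}$; by Theorem~\ref{thm:nucleipresemifield}(d), $\cK_{r,\omega}(\S)$ is the largest subfield of $\cN_r(\S)$ all of whose elements $\rho$ satisfy $\rho\circ\varphi=\varphi\circ(\omega^{-1}\circ\rho\circ\omega)$ for all $\varphi\in\cC$. To show $F_q\subseteq\cK_{r,\omega}(\S)$ it suffices to check that every scalar map $t_\lambda\in F_q$ meets this condition. But $t_\lambda$ is central in $\V={\rm End}_{\F_p}(\F_{p^n})$ when $\lambda\in\F_q$ and $\cC\subseteq{\rm End}_{\F_q}(\F_{p^n})$: indeed for $\varphi\in\cC$ we have $t_\lambda\circ\varphi=\varphi\circ t_\lambda$ (this is precisely $\F_q$--linearity of $\varphi$), hence $\omega^{-1}\circ t_\lambda\circ\omega=t_\lambda$ and the identity $t_\lambda\circ\varphi=\varphi\circ(\omega^{-1}\circ t_\lambda\circ\omega)$ holds trivially. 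Since we have already shown $F_q\subseteq\cN_r(\S)$, we conclude $F_q\subseteq\cK_{r,\omega}(\S)$. The argument for $\cK_{m,\sigma}(\S)$ is identical, using part (b)/(d) of Theorem~\ref{thm:nucleipresemifield} relative to $\cC$ and the fact that $t_\mu$ commutes with every element of $\cC$ for $\mu\in\F_q$. Finally, the inclusions $\cK_{r,\omega}(\S)\subseteq\cN_r(\S)\subseteq{\rm End}_{\F_q}(\F_{p^n})$ and $\cK_{m,\sigma}(\S)\subseteq\cN_m(\S)\subseteq{\rm End}_{\F_q}(\F_{p^n})$ are immediate from the definitions in Theorem~\ref{thm:nucleipresemifield} and the nucleus containments already established, completing the last displayed line.

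There is no real obstacle here; the only point requiring a moment's care is the equivalence between ``$F$ is a $q$--polynomial'' and ``$\cC,\cC^d\subseteq{\rm End}_{\F_q}(\F_{p^n})$'' as stated in the hypothesis, and the observation that this is exactly what makes the scalar maps $t_\lambda$ ($\lambda\in\F_q$) commute with all of $\cC$ and $\cC^d$ — which is what drives both the semilinearity hypotheses of Theorem~\ref{thm:nucleipresemifieldfromF} (with trivial associated automorphism) and the centrality needed for the center. I would also remark that, because the associated automorphisms $\tau$ and $\sigma$ of $\F_q$ are trivial in this setting, one does not merely get $F_q$ inside the center but one sees directly that conditions \eqref{eqRiCenter} and \eqref{eqMiCenter} are automatically satisfied by $F_q$ regardless of the choice of $\omega$ or $\sigma$; hence the statement holds ``for each $\omega,\sigma\in\cC\setminus\{0\}$'' as claimed. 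I expect the write-up to be short, essentially a citation of Theorem~\ref{thm:nucleipresemifieldfromF} for the nuclei and a one-line verification of \eqref{eqRiCenter}, \eqref{eqMiCenter} for the scalar maps for the center.
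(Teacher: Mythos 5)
Your proposal is correct and follows essentially the same route as the paper: the paper's proof simply notes that $F(\lambda x,y)=\lambda F(x,y)=F(x,\lambda y)$ for $\lambda\in\F_q$ and then cites Theorem~\ref{thm:nucleipresemifieldfromF} together with point $(d)$ of Theorem~\ref{thm:nucleipresemifield}. You have merely written out in detail the verification of conditions \eqref{eqRiCenter} and \eqref{eqMiCenter} for the scalar maps $t_\lambda$, which the paper leaves implicit.
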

\begin{proof}
It is sufficient to note that in this case we can write
$F(x,y)=\sum_{i,j=0}^{h-1}a_{ij}x^{q^i}y^{q^j}$ with
$a_{ij}\in\F_{p^n}$ and $q^h=p^n$. Then for each $\lambda\in\F_q$,
we get $F(\lambda x,y)=\lambda F(x,y)=F(x,\lambda y)$ and hence by
Theorem \ref{thm:nucleipresemifieldfromF} and  point $(d)$ of Theorem
\ref{thm:nucleipresemifield}, the assertion follows.
\end{proof}

Recall that  the dual and the transpose operations are invariant
under isotopy. Hence it makes sense to ask which is the isotopism
involving the duals and the transposes of two isotopic
presemifields (see \cite[Proposition 2.3]{MaPoSub}). Precisely, if
$\langle, \rangle$ is a given non-degenerate symmetric bilinear
form of $\F_{p^n}$ over $\F_p$ and $\overline \varphi$ denotes the
adjoint of $\varphi \in \V$ with respect to $\langle, \rangle$,
then we can prove the following

\begin{prop}\label{prop:DualTranspIsot}
Let $\S_1= (\F_{p^{n}},+,\bullet)$ and $\S_2=
(\F_{p^{n}},+,\star)$ be two presemifields. Then
\begin{itemize}
\item[$i)$] $(g_1,g_2,g_3)$ is an isotopism between $\S_1$ and
$\S_2$ if and only if $(g_2,g_1,g_3)$ is an isotopism between the
dual presemifields $\S_1^d$ and $\S_2^d$;
 \item[$ii)$]  $(g_1,g_2,g_3)$ is an
isotopism between $\S_1$ and $\S_2$ if and only if
$(\overline{g_3}^{^{-1}},g_2,\overline{g_1}^{^{-1}})$ is an
isotopism between the transpose presemifields $\S_1^t$ and
$\S_2^t$.
\end{itemize}
\end{prop}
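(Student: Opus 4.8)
The plan is to prove both statements by translating the isotopy relation into the language of spread sets via Proposition \ref{prop:IsotopicSemifAndSpreadSets} and then applying the explicit description of duals and transposes in terms of the bilinear form $\langle,\rangle$.

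For part $i)$, the key observation is that if $\cC_1=\{\varphi_y\}$ and $\cC_2=\{\varphi'_y\}$ are the spread sets of $\S_1$ and $\S_2$, then the spread set $\cC_1^d$ of $\S_1^d$ consists of the maps $\psi^x\colon y\mapsto x\bullet y$ (the ``left'' multiplications of $\S_1$), and similarly for $\S_2^d$. By Proposition \ref{prop:IsotopicSemifAndSpreadSets}, $(g_1,g_2,g_3)$ is an isotopism between $\S_1$ and $\S_2$ iff $g_1(x)\star g_2(y)=g_3(x\bullet y)$ for all $x,y$, which literally rewrites as $g_2(y)\star^d g_1(x)=g_3(y\bullet^d x)$; this says exactly that $(g_2,g_1,g_3)$ is an isotopism between $\S_1^d$ and $\S_2^d$. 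So part $i)$ is essentially immediate from the definition, once one has unwound the notation; I would present it in one or two lines. (One could equally invoke Corollary \ref{cor:nucleiisotopy} or the structure-constant description $a_{ijk}\mapsto a_{jik}$, but the direct computation is cleanest.)

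For part $ii)$, I would again use Proposition \ref{prop:IsotopicSemifAndSpreadSets}: $(g_1,g_2,g_3)$ is an isotopism iff $\cC_2=g_3\,\cC_1\,g_1^{-1}$, i.e.\ every element of $\cC_2$ has the form $g_3\circ\varphi\circ g_1^{-1}$ with $\varphi\in\cC_1$. Now apply the adjoint antiautomorphism $T\colon\psi\mapsto\overline{\psi}$ of $\V$ with respect to $\langle,\rangle$. Since $T$ reverses composition, $\overline{g_3\circ\varphi\circ g_1^{-1}}=\overline{g_1^{-1}}\circ\overline{\varphi}\circ\overline{g_3}=\overline{g_1}^{-1}\circ\overline{\varphi}\circ\overline{g_3}$, using that $T$ commutes with taking inverses (the adjoint of an invertible map is invertible with inverse the adjoint of the inverse). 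Hence $\overline{\cC_2}=\overline{g_1}^{-1}\,\overline{\cC_1}\,\overline{g_3}=\overline{g_1}^{-1}\,\overline{\cC_1}\,(\overline{g_3}^{-1})^{-1}$. By the description of the transpose recalled in the excerpt, $\cC_i^t=\overline{\cC_i}$ (with the chosen form $\langle,\rangle$), so this last identity says precisely $\cC_2^t=(\overline{g_3}^{-1})\,\cC_1^t\,(\overline{g_1}^{-1})^{-1}$, which by Proposition \ref{prop:IsotopicSemifAndSpreadSets} again is equivalent to $(\overline{g_3}^{-1},g_2,\overline{g_1}^{-1})$ being an isotopism between $\S_1^t$ and $\S_2^t$. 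The middle map $g_2$ plays no role in the spread-set formulation, which is why it is unchanged. Finally, since $t$ is an involution ($(\S^t)^t=\S$, $\overline{\overline{g}}=g$), the converse direction follows by applying the same argument to $\S_1^t,\S_2^t$.

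The only genuinely delicate point is bookkeeping: keeping straight that $g_1$ is the ``source'' map attached to the first variable and $g_3$ the ``product'' map, so that under transposition they swap roles (with inverse-adjoints) while $g_2$ is fixed, and checking that $T$ interacts correctly with inverses so the formula $(\overline{g_3}^{-1},g_2,\overline{g_1}^{-1})$ comes out with the exponents in the right places. There is no real obstacle beyond this careful tracking of which map sits where and one application of the fact that an antiautomorphism of a ring sends $uv$ to $T(v)T(u)$.
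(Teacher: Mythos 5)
Your argument is essentially the paper's: part $i)$ is the same one-line unwinding of the definition of the dual multiplication, and part $ii)$ rests on exactly the same key fact, namely that the adjoint map $T:\varphi\mapsto\overline\varphi$ is an antiautomorphism of $\V$; the only presentational difference is that you phrase the transfer at the level of spread sets via Proposition \ref{prop:IsotopicSemifAndSpreadSets}, while the paper works pointwise with the equations $g_3\circ\varphi_y=\varphi'_{g_2(y)}\circ g_1$ and rewrites them directly in terms of $\bullet^t$ and $\star^t$.

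Two small points should be repaired. First, your intermediate restatement is garbled: from $\overline{\cC_2}=\overline{g_1}^{-1}\,\overline{\cC_1}\,\overline{g_3}$ and the convention $\cC_2=g_3\,\cC_1\,g_1^{-1}$ of Proposition \ref{prop:IsotopicSemifAndSpreadSets}, the correct reading is $\cC_2^t=\overline{g_1}^{-1}\,\cC_1^t\,\bigl(\overline{g_3}^{-1}\bigr)^{-1}$, not $\cC_2^t=\overline{g_3}^{-1}\,\cC_1^t\,\bigl(\overline{g_1}^{-1}\bigr)^{-1}$ as written; fortunately the triple you then extract, $(\overline{g_3}^{-1},\,\cdot\,,\overline{g_1}^{-1})$, is the one matching the correct identity, so this is only a transcription slip. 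Second, the set-level equality $\cC_2^t=\overline{g_1}^{-1}\,\cC_1^t\,\overline{g_3}$ only guarantees, via Proposition \ref{prop:IsotopicSemifAndSpreadSets}, that \emph{some} invertible middle map completes the triple, whereas the statement asserts that it is precisely $g_2$. To pin this down, note that taking adjoints preserves the $y$-indexing of the spread-set elements: applying $T$ to $\varphi'_{g_2(y)}=g_3\circ\varphi_y\circ g_1^{-1}$ gives $\overline{\varphi'_{g_2(y)}}=\overline{g_1}^{-1}\circ\overline{\varphi_y}\circ\overline{g_3}$ for each $y$, which is exactly the pointwise form the paper uses and shows the middle map is $g_2$ itself. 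With these two adjustments your proof is correct and coincides with the paper's.
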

\begin{proof} Statement $i)$ easily follows from the definition of dual
operation.

Let us prove $ii)$.  Let $\cC_1=\{\varphi_y|\ y\in\F_{p^n}\}$ and
$\cC_2=\{\varphi'_y|\ y\in\F_{p^n}\}$ be the corresponding spread
sets. By the previous arguments the corresponding transpose
presemifields are defined by the following multiplications
$x\bullet^t y=\overline{\varphi_y}(x)$ and $x\star^t
y=\overline{\varphi'_y}(x)$, respectively. The triple
$(g_1,g_2,g_3)$ is an isotopism between $\S_1$ and $\S_2$ if and
only if $g_3\circ\varphi_y=\varphi'_{g_2(y)}\circ g_1$ for each
$y\in\F_{p^n}$. Since $\overline{\varphi_y}\circ
\overline{g_3}=\overline{g_1}\circ \overline{\varphi'_{g_2(y)}}$
for each $y\in\F_{p^n}$, we have
 $$\overline{g_3}(x)\bullet^t
y=\overline{g_1}(x\star^tg_2(y))$$ for each $x,y\in\F_{p^n}$. This
is equivalent to $\overline{g_1}^{^{-1}}(z\bullet^t
y)=\overline{g_3}^{^{-1}}(z)\star^tg_2(y)$ for each
$z,y\in\F_{p^n}$. So, the assertion follows.
\end{proof}

\begin{remark}
{\rm If $\S_1$ and $\S_2$ are two isotopic presemifields, by using
$i)$ and $ii)$ of the previous preposition, it is possible to
determine the isotopisms between the other derivatives of $\S_1$
and $\S_2$. }
\end{remark}

Finally, if two presemifields are both defined by $\F_q$-linear
maps, then we have a restriction on the possible isotopisms
between them (see \cite[Thm. 2.2]{MaPoSub}).

\begin{theorem}\label{thm:semilinearity}
If $(g_1,g_2,g_3)$ is an isotopism between two presemifields
$\S_1$ and $\S_2$ of order $p^n$, whose associated spread sets
$\cC_1$ and $\cC_2$ are contained in ${\rm End}_{\F_q}(\F_{p^n})$
($\F_q$ a subfield of $\F_{p^n}$), then $g_3$ and $g_1$ are
$\F_q$--semilinear maps of $\F_{p^n}$ with the same companion
automorphism.
\end{theorem}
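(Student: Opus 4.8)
The plan is to start from the spread-set characterization of isotopy in Proposition~\ref{prop:IsotopicSemifAndSpreadSets}, which tells us that $\cC_2 = g_3\,\cC_1\,g_1^{-1}$. The goal is to show that the two "outer" maps $g_1,g_3$ cannot mix $\F_q$-structure arbitrarily: each must be additive and $\F_q$-semilinear, and with the \emph{same} companion automorphism. First I would fix a scalar $\lambda\in\F_q$ and its associated scalar map $t_\lambda\in\V$; since $\cC_1\subseteq{\rm End}_{\F_q}(\F_{p^n})$, every $\varphi\in\cC_1$ commutes with $t_\lambda$. Conjugating by $g_1$ and by $g_3$, consider the maps $a_\lambda:=g_1\circ t_\lambda\circ g_1^{-1}$ and $b_\lambda:=g_3\circ t_\lambda\circ g_3^{-1}$ in $\V$. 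The identity $\cC_2=g_3\cC_1 g_1^{-1}$ then forces, for every $\psi\in\cC_2$, the relation $b_\lambda\circ\psi = \psi\circ a_\lambda$ (obtained by writing $\psi=g_3\varphi g_1^{-1}$, inserting $t_\lambda\circ t_\lambda^{-1}$ appropriately, and using $\varphi t_\lambda=t_\lambda\varphi$). So $b_\lambda$ and $a_\lambda$ play the role of a matched pair of "left/right multipliers" for $\cC_2$.

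The key step is to identify $a_\lambda$ and $b_\lambda$ concretely. Because $\psi\mapsto\psi\circ a_\lambda$ and $\psi\mapsto b_\lambda\circ\psi$ agree on all of $\cC_2$, and $\cC_2$ contains an invertible element (indeed every nonzero element of a spread set is invertible), choose one such $\psi_0\in\cC_2\setminus\{0\}$; then $b_\lambda=\psi_0\circ a_\lambda\circ\psi_0^{-1}$, so the two families $\{a_\lambda\}_{\lambda\in\F_q}$ and $\{b_\lambda\}_{\lambda\in\F_q}$ are conjugate via a fixed element $\psi_0$. Now I would argue that $\{a_\lambda:\lambda\in\F_q\}$ is a field of $\F_p$-linear maps isomorphic to $\F_q$: it is clearly closed under composition and addition (being the image of $F_q$ under conjugation by the invertible map $g_1$), so $a:\lambda\mapsto a_\lambda$ is an injective ring homomorphism $\F_q\to\V$, and its image is a subfield of $\V$ isomorphic to $\F_q$. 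A subfield of $\V={\rm End}_{\F_p}(\F_{p^n})$ isomorphic to $\F_q$ is (up to the action of $\mathrm{GL}$, but here pinned down by the relation above) conjugate to $F_q$ itself; more directly, since $g_1$ conjugates $F_q$ onto $\{a_\lambda\}$, for each $\lambda$ there is a unique $\lambda'\in\F_q$ with $g_1\circ t_\lambda = t_{\lambda'}\circ g_1$, and $\lambda\mapsto\lambda'$ is a field automorphism of $\F_q$, i.e. $\lambda'=\lambda^{\theta_1}$ for some $\theta_1\in\mathrm{Aut}(\F_q)$. Unwinding $g_1(\lambda x)=\lambda^{\theta_1}g_1(x)$ shows $g_1$ is $\F_q$-semilinear with companion automorphism $\theta_1$. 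The same argument applied to $g_3$ and $\{b_\lambda\}$ gives an automorphism $\theta_3$ with $g_3$ $\F_q$-semilinear with companion $\theta_3$.

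It remains to prove $\theta_1=\theta_3$, and this is the main obstacle. Here I would use the conjugacy $b_\lambda=\psi_0\circ a_\lambda\circ\psi_0^{-1}$ together with $a_\lambda = t_{\lambda^{\theta_1}}$ viewed through $g_1$ and $b_\lambda=t_{\lambda^{\theta_3}}$ viewed through $g_3$: translating back, the relation $b_\lambda\circ\psi=\psi\circ a_\lambda$ for all $\psi\in\cC_2$ reads $t_{\lambda^{\theta_3}}\circ\psi = \psi\circ t_{\lambda^{\theta_1}}$. Applying this with $\psi=\psi_0$ invertible and evaluating, or simply comparing with the fact that $\psi_0\in{\rm End}_{\F_q}(\F_{p^n})$ (so $\psi_0$ commutes with $t_\mu$ for all $\mu\in\F_q$), we get $t_{\lambda^{\theta_3}}\circ\psi_0=\psi_0\circ t_{\lambda^{\theta_1}}=t_{\lambda^{\theta_1}}\circ\psi_0$, whence $t_{\lambda^{\theta_3}}=t_{\lambda^{\theta_1}}$ and therefore $\lambda^{\theta_3}=\lambda^{\theta_1}$ for every $\lambda\in\F_q$, i.e. $\theta_1=\theta_3$. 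This closes the argument. One should double-check the edge cases $\lambda=0$ and the invertibility of $\psi_0$ (guaranteed since nonzero spread-set elements are bijective), and note that additivity of $g_1,g_3$ is part of the hypothesis (they are $\F_p$-linear), so "$\F_q$-semilinear" here means exactly the scalar-twisting identity just derived.
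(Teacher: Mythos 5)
Your setup is fine: the relation $b_\lambda\circ\psi=\psi\circ a_\lambda$ (with $a_\lambda=g_1\circ t_\lambda\circ g_1^{-1}$, $b_\lambda=g_3\circ t_\lambda\circ g_3^{-1}$, $\psi\in\cC_2$) is correctly derived from $\cC_2=g_3\cC_1g_1^{-1}$ and the $\F_q$-linearity of the elements of $\cC_1$, and your final step showing the two companion automorphisms agree (using an invertible $\F_q$-linear $\psi_0\in\cC_2$) is sound \emph{once semilinearity is known}. The genuine gap is exactly at the point where you pass from ``$\{a_\lambda\}$ is a subfield of $\V$ isomorphic to $\F_q$, conjugate to $F_q$'' to ``for each $\lambda$ there is $\lambda'\in\F_q$ with $g_1\circ t_\lambda=t_{\lambda'}\circ g_1$''. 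That last identity says $a_\lambda\in F_q$, i.e. $g_1F_qg_1^{-1}=F_q$, which is \emph{equivalent} to $g_1$ being $\F_q$-semilinear --- it is the very statement to be proved, not a consequence of conjugacy. Every invertible $g_1$ conjugates $F_q$ onto some subfield of $\V$ isomorphic to $\F_q$; if $g_1$ is not semilinear this subfield is simply different from $F_q$. Nor does your relation $b_\lambda=\psi_0\circ a_\lambda\circ\psi_0^{-1}$ ``pin it down'': it only shows the two families $\{a_\lambda\}$, $\{b_\lambda\}$ are conjugate to each other, which is automatic and compatible with neither of them lying in $F_q$. So as written the proof assumes its conclusion at the decisive step.

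What is needed, and what the paper does, is an argument forcing the conjugate copy of $F_q$ back into $F_q$. Since $\cC_1,\cC_2\subseteq{\rm End}_{\F_q}(\F_{p^n})$, Corollary \ref{cor:nucleipresemifield} gives $F_q\subseteq\cN_l(\S_1)\cap\cN_l(\S_2)$, and Corollary \ref{cor:nucleiisotopy} gives $\cN_l(\S_2)=g_3\cN_l(\S_1)g_3^{-1}$; hence $g_3F_qg_3^{-1}$ is a subfield of order $q$ of the \emph{finite field} $\cN_l(\S_2)$, which already contains $F_q$, and uniqueness of the subfield of a given order in a finite field forces $g_3F_qg_3^{-1}=F_q$. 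Only then does conjugation by $g_3$ induce an automorphism of $F_q$, giving $g_3t_\lambda g_3^{-1}=t_{\lambda^{p^i}}$, i.e. semilinearity of $g_3$; semilinearity of $g_1$ with the same companion automorphism then follows from $g_3\cC_1g_1^{-1}=\cC_2$ exactly as in your last paragraph. Note also that with only the hypothesis $\cC_1,\cC_2\subseteq{\rm End}_{\F_q}(\F_{p^n})$ one cannot argue via $\cN_m$ or $\cN_r$ (e.g. $\cC_1t_\lambda\subseteq\cC_1$ is not guaranteed), so some appeal to the left-nucleus mechanism, or an equivalent containment of $g_3F_qg_3^{-1}$ in a finite field containing $F_q$, is unavoidable; your proposal is missing this ingredient.
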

\begin{proof}
Since $\cC_1,\, \cC_2\subset {\rm End}_{\F_q}(\F_{p^n})$, by Corollary
\ref{cor:nucleipresemifield}, we have that
$$F_q \subset \cN_l(\S_1)\cap\cN_l(\S_2).$$
Also by Corollary  \ref{cor:nucleiisotopy},
$\cN_l(\S_2)=g_3\cN_l(S_1)g_3^{-1}$. Then $g_3F_q g_3^{-1}\subset
\cN_l(\S_2)$, and since a finite field contains a unique subfield of
given order, it follows $g_3F_qg_3^{-1}=F_q$. Hence the map
$t_\lambda\mapsto g_3 t_\lambda g_3^{-1}$ is an automorphism of
the field of maps $F_q$, and so there exists $i\in\{0,\dots,k-1\}$ such
that $g_3 t_\lambda g_3^{-1}=t_{\lambda^{p^i}}$ (where $q=p^k$) for each
$\lambda\in\F_q$, i.e. $g_3$ is an $\F_q$--semilinear map of
$\F_{p^n}$ with companion automorphism $\sigma(x)=x^{p^i}$. Finally,
by Proposition \ref{prop:IsotopicSemifAndSpreadSets},
$g_3\cC_1g_1^{-1}=\cC_2$, and hence $g_1$ is an $\F_q$--semilinear
map of $\F_{p^n}$ as well, with the same companion automorphism
$\sigma$.
\end{proof}

\section{The known families of commutative semifields}
In 2003, Kantor in its article \cite{Kantor2003} pointed out as commutative semifields in odd characteristic, in particular when the characteristic is greater than 3, were very rare objects. Indeed until then the known examples of commutative proper \footnote{Here a presemifield is called {\it proper} if it not isotopic to a field} (pre)semifields of {\bf odd order} were
\begin{itemize}
\item[$\cD$)] {\em Dickson semifields} {\rm\cite{Dickson1906}}:\quad $(\F_{q^k}\times\F_{q^k},+,\star)$, $q$ odd and $k>1$ odd, with
$$(a,b)\star (c,d)=(ac+jb^\sigma d^\sigma,ad+bc),$$
where $j$ is a nonsquare in $\F_{q^k}$, $\sigma$ is an $\F_q$--automorphism of $\F_{q^k}$, $\sigma\ne\,id$. These presemifields have middle nucleus of order
$q^k$ and center of order $q$ (see {\rm\cite{Dickson1905}},
{\rm\cite{Dickson1906}}, {\rm\cite{Dickson1906-1}}).
\item[$\cA$)] {\em Generalized twisted fields} {\rm\cite{Albert1961P}}:\quad $(\F_{q^t},+,\star)$, $q$ odd and $t>1$ odd, with
$$x\star y=x^\alpha y+xy^\alpha,$$
where $\alpha:x\mapsto x^{q^n}$ is automorphism of $\F_{q^t}$, with ${\rm Fix}\,\sigma=\F_q$ and $\frac{t}{\gcd(t,n)}$ is odd. These presemifields have middle nucleus and center both of order $q$ {\rm (see \cite{Albert1961A})}.
\item[$\cG$)] {\em Ganley semifields} {\rm\cite{Ganley1981}}:\quad $(\F_{3^r}\times\F_{3^r},+,\star)$, $r\geq 3$ odd, with
$$(a,b)\star (c,d)=(ac-b^9d-bd^9,ad+bc+b^3d^3).$$ These semifields have middle nucleus and center both of order $3$.
\item[$\cCG$)] {\em Cohen--Ganley semifields} {\rm\cite{CoGa1982}}:\quad $(\F_{3^s}\times\F_{3^s},+,\star)$, $s\geq 3$, with
$$(a,b)\star (c,d)=(ac+jbd+j^3(bd)^9, ad+bc+j(bd)^3)$$
where $j$ is a nonsquare in $\F_{3^s}$. These semifields have middle nucleus of order $3^s$ and center of order $3$.
\item[$\cCM/\cDY$)] {\em Coulter--Matthews/Ding--Yuan presemifields} {\rm\cite{CoMa1997}, \cite{DiYu2006}}:\quad $(\F_{3^e},+,\star)$, $e\geq 3$ odd, with
$$x\star y=x^9y+xy^9\pm 2x^3y^3-2xy.$$
Arguing as in the proof of Theorem \ref{thm:nucleiB},
straightforward computations show that the
$\cCM/\cDY$ presemifields have nuclei and center all of order
$3$. In \cite{CoHe2008}, the authors have showed that, for each
$e\geq 5$ odd, these two presemifields are not isotopic and they
are not isotopic to any previously known commutative semifield.
\item[$\cPW/\cBLP$)] {\em Penttila--Williams/Bader--Lunardon--Pinneri
semifield} {\rm\cite{PeWi2000},
\cite{BaLuPi1999}}:\quad $(\F_{3^5}\times\F_{3^5},+,\star)$, with
$$(a,b)\star (c,d)=(ac+(bd)^9,ad+bc+(bd)^{27}).$$
This commutative semifield arises from the symplectic semifield
associated with the Penttila--Williams translation ovoid of
$Q(4,3^5)$. The $\cPW/\cBLP$ semifield has middle nucleus of order
$3^5$ and center of order $3$.
\item[$\cCHK$)] {\em Coulter--Henderson--Kosick presemifield} {\rm\cite{CoHeKo2007}}:\quad $(\F_{3^8},+,\star)$, with
    $$x\star y=xy+L(xy^9+x^9y-xy-x^9y^9)+x^{243}y^3+x^{81}y-x^{9}y+x^3y^{243}+xy^{81}-xy^{9},$$where $L(x)=x^{3^5}+x^{3^2}$. This presemifield has middle nucleus of order $3^2$ and center of order $3$.
\end{itemize}

\bigskip

Note that two (pre)semifields belonging to different families of the previous list are not isotopic.

\bigskip

In the last years some other commutative semifields have been constructed, precisely:
\begin{itemize}
\item[$\cZKW$)] {\em Zha--Kyureghyan--Wang presemifields}
{\rm\cite{ZaKyWa2009}, \cite[Thm. 4]{Bierbrauer2009}}:\quad
$(\F_{p^{3s}},+,\star)$, with
$$x\star y=y^{p^t}x+yx^{p^t}-u^{p^{s}-1}(y^{p^{s+t}}x^{p^{2s}}+y^{p^{2s}}x^{p^{s+t}}),$$
where $u$ is a primitive element of $\F_{p^{3s}}$ and
$0<t<3s$ such that $\frac s{\gcd(s,t)}$ is odd and
\begin{equation}\label{form:cinesi}
\frac s{\gcd(s,t)}+\frac t{\gcd(s,t)}\equiv\,0\,(mod\,3).
\end{equation}
In \cite[Cor. 1]{ZaKyWa2009}, it has been proven that, if $p\geq 5$, $s$ is odd and $t\ne 2s$, these presemifields are not isotopic to any previously know presemifield listed above. In \cite[Cor. 3]{LuMaPoTr2011} the same result has been obtained also when $s$ is even.

In \cite{Bierbrauer2010}, the author has proven that the previous multiplication gives rise to a commutative presemifield if, instead of Condition (\ref{form:cinesi}), the following condition is fulfilled
\begin{equation}\label{form:bierb}
p^s\equiv p^t\equiv\,1\,(mod\,3).
\end{equation}
Moreover, in \cite[Thm. 7]{Bierbrauer2009} it has shown that, when $p\equiv \,1\,(mod\,3)$ these presemifields are not isotopic to a Generalized twisted field.

\item[$\cB$)] {\em Bierbrauer presemifields}
{\rm\cite{Bierbrauer2010}}:\quad \quad $(\F_{p^{4s}},+,\star)$,
$p$ odd prime, with
$$x\star y=y^{p^t}x+yx^{p^t}-u^{p^{s}-1}(y^{p^{s+t}}x^{p^{3s}}+y^{p^{3s}}x^{p^{s+t}}),$$
where $u$ is a primitive element of $\F_{p^{4s}}$ and $0<t<4s$
such that $\frac {2s}{\gcd(2s,t)}$ is odd and $p^s\equiv
p^t\equiv\,1\,(mod\,4)$. In \cite[Thm. 7]{Bierbrauer2010}, it has
been proven that, if $t=2$ and $s>1$, these presemifields are not
isotopic neither to a Dickson semifield nor to a Generalized twisted field.
\end{itemize}

In \cite{BuHe2008}, two families of commutative presemifields of
order $p^{2m}$, $p$ odd prime, are constructed starting from
certain Perfect Nonlinear DO--polynomials over $\F_{p^{2m}}$
labeled as $(i^*)$ and $(ii^*)$. In \cite[Thm. 3]{BuHe2010} it has
been shown that the middle nucleus of the presemifields of type
$(i^*)$ has square order. In this way the authors have proven that
for $p\ne 3$ and $m$ odd the commutative presemifields of type
$(i^*)$ are new (\cite[Cor. 8]{BuHe2010}). Later on, in
\cite{BierbrauerSub}, these presemifields are simplified. More
precisely, the author proves that these two families of
presemifields are contained, up to isotopy, into the following
family

\begin{itemize}
\item[$\cBH$)] {\em Budaghyan--Helleseth presemifields}
{\rm\cite{BuHe2008}, \cite{BierbrauerSub}}:\quad \quad
$(\F_{p^{2m}},+,\star)$, $p$ odd prime and $m>1$, with
\begin{equation}\label{form:multiplSimplBHp}
x\star y=xy^{p^m}+x^{p^m}y+[\beta(xy^{p^s}+x^{p^s}y)+\beta^{p^m}(xy^{p^s}+x^{p^s}y)^{p^m}]\omega,
\end{equation}
where $0<s<2m$, $\omega$ is an element of
$\F_{q^{2m}}\setminus \F_{q^m}$ with $\omega^{q^m}=-\omega$ and the following conditions are satisfied:
\begin{equation}\label{form:condMultSimplBHp}
\beta\in\F_{p^{2m}}^*:\ \ \beta^{\frac{p^{2m}-1}{(p^m+1,p^s+1)}}\ne 1\quad\quad\mbox{and}\quad\quad
\mbox{$\not\!\exists\ a\in\F_{p^{2m}}^*:$ $a+a^{p^m}=a+a^{p^s}=0$.}
\end{equation}
\end{itemize}

Also in \cite{BierbrauerSub}, the author presents the family of
commutative semifields

\begin{itemize}
\item[$\cP$)] {\em $P(q,\ell)$ semifields}
{\cite{BierbrauerSub}}:\quad \quad $(\F_{q^{2\ell}},+,\star)$, $q$
odd prime power and $\ell=2k+1>1$ odd, with
$$x*y=\frac{1}2(xy+x^{q^\ell}y^{q^\ell})+\frac 14G(xy^{q^2}+x^{q^2}y),
$$
where
$G(x)=\sum_{i=1}^{k}(-1)^{i}(x-x^{q^{\ell}})^{q^{2i}}+\sum_{j=1}^{k-1}(-1)^{k+j}(x-x^{q^{\ell}})^{q^{2j+1}}$.

\end{itemize}

These semifields generalize the semifields constructed in
\cite{LuMaPoTr2011}, which have order $q^6$, middle nucleus of
order $q^2$ and center of order $q$ (\cite[Thm. 8]{LuMaPoTr2011}).
In \cite{BierbrauerSub} it has been proven that $\cP$ is not
isotopic to any previously known semifield with the possible
exception of $\cBH$ presemifields. Indeed, it has been recently
proven, in \cite{MaPoSub}, that each $\cP$ semifield is
isotopic to a $\cBH$ presemifield.

\bigskip

The aim of this paper is to study the isotopy relation involving
the commutative presemifields listed above. In order to do this, a
very useful tool will be the computation of the order of their
middle nucleus and their center. (Recall that, if a presemifield
is isotopic to a commutative semifield $\S$, then
$\N_\ell(\S)=\N_r(\S)=\cK(\S)$.)

\section{The isotopy issue}

In this section we want to face with the isotopy issue between the
presemifields listed in the previous section. In order to do this we first
compute the nuclei of the involved presemifields.

\subsection{The nuclei of $\mathcal BH$ presemifields}
Let $p$ be an odd prime, $m$ and $s$ two positive integers such
that $0<s<2m$.  Let $\omega$ be an element of
$\F_{p^{2m}}\setminus \F_{p^m}$ with $\omega^{p^m}=-\omega$, the
Budaghyan--Helleseth presemifields presented in
\cite{BierbrauerSub} are defined by Multiplication
(\ref{form:multiplSimplBHp}) under Conditions
(\ref{form:condMultSimplBHp}).

Set $h:=\gcd(m,s)$, then $m=h\ell$ and $s=hd$, where $\ell$ and
$d$ are two positive integers such that $0<d<2\ell$ and
$\gcd(\ell,d)=1$. Putting $q=p^h$, then
$\omega\in\F_{q^{2\ell}}\setminus \F_{q^\ell}$ and
$\omega^{q^\ell}=-\omega$ and the Budaghyan--Helleseth
presemifields $BH(p,m,s,\beta)$ will be
denoted by $\overline{BH}(q,\ell,d,\beta)=(\F_{q^{2\ell}},+,\star)$. Moreover,
Multiplication (\ref{form:multiplSimplBHp}) and Conditions
(\ref{form:condMultSimplBHp}) can be rewritten as
\begin{equation}\label{form:MultBH}
x\star y=
xy^{q^\ell}+x^{q^\ell}y+[\beta(xy^{q^d}+x^{q^d}y)+\beta^{q^\ell}(xy^{q^d}+x^{q^d}y)^{q^\ell}]\omega,
\end{equation}

where
\begin{equation}\label{form-beta1}
\beta\in\F_{q^{2\ell}}^*:\ \ \beta^{\frac{q^{2\ell}-1}{(q^\ell+1,q^d+1)}}\ne 1,
\end{equation}
and
\begin{equation}\label{form-s1}
\not\!\exists\ a\in\F_{q^{2\ell}}^*:\ \ a+a^{q^\ell}=a+a^{q^d}=0.
\end{equation}

Referring to Multiplication (\ref{form:MultBH}) for the two
families of commutative presemifields of type $(i^*)$ and $(ii^*)$
presented in \cite{BuHe2008}, it has been proven that in both
cases their middle nucleus always contains a field of order $q$
(see \cite[Prop. 5 and Prop. 7]{BuHe2010}). Indeed we will prove that it
has order $q^2$.

In \cite[Sec. 3]{MaPoSub} it has been proved that
(\ref{form-beta1}) and (\ref{form-s1}) are equivalent to
\begin{equation}\label{form-s1N}
\ell+d \ \ \mbox{odd}
\end{equation}
and
\begin{equation}\label{form-beta1N}
\beta \ \ \mbox{nonsquare in}\ \ \F_{q^{2l}}.
\end{equation}

Now we can prove

\begin{theorem}\label{thm:nucleiBH}
A $\overline{BH}(q,\ell,d,\beta)$ presemifield of order
$q^{2\ell}$, $q$ an odd prime power and $\ell>1$, has middle nucleus
of order $q^{2}$ and right nucleus, left nucleus and center all of
order $q$.
\end{theorem}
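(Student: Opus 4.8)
The plan is to compute all four parameters directly from the spread set $\cC$ of $\S:=\overline{BH}(q,\ell,d,\beta)$, reading them off from Theorem \ref{thm:nucleipresemifield} and Corollary \ref{cor:nucleipresemifield}. Since Multiplication (\ref{form:MultBH}) is symmetric in $x$ and $y$, the presemifield $\S$ is commutative, so $\S^d=\S$ and hence by Proposition \ref{propo:gironuclei} $\cN_\ell(\S)=\cN_r(\S^d)=\cN_r(\S)$. As the center $\cK_{r,\omega}(\S)$ is contained in $\cN_r(\S)$ by Theorem \ref{thm:nucleipresemifield} and contains $F_q$ by Corollary \ref{cor:nucleipresemifield}, it suffices to prove that $|\cN_m(\S)|=q^2$ and $|\cN_r(\S)|=q$; the latter then gives $|\cN_\ell(\S)|=|\cK_{r,\omega}(\S)|=q$ as well. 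Moreover every monomial in $F(x,y)=x\star y$ has $x$- and $y$-degree a power of $q$, so $F$ is a $q$-polynomial and Corollary \ref{cor:nucleipresemifield} already yields $F_q\subseteq\cN_m(\S)\cap\cN_r(\S)$ and $\cN_m(\S),\cN_r(\S)\subseteq{\rm End}_{\F_q}(\F_{q^{2\ell}})$; since a field of $\F_q$-linear maps of $\F_{q^{2\ell}}$ containing $F_q$ has order $q^k$ for some $k\mid 2\ell$, it remains only to determine these exponents.

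The second step is to make the spread set explicit. Expanding (\ref{form:MultBH}), the map $\varphi_y\colon x\mapsto x\star y$ is the $q$-polynomial
\begin{equation*}
\varphi_y(x)=\bigl(y^{q^\ell}+\beta\omega\, y^{q^d}\bigr)\,x+\bigl(y+\beta^{q^\ell}\omega\, y^{q^{d+\ell}}\bigr)\,x^{q^\ell}+\beta\omega\, y\, x^{q^d}+\beta^{q^\ell}\omega\, y^{q^\ell}\, x^{q^{d+\ell}},
\end{equation*}
exponents read modulo $2\ell$; because $\ell>1$, $0<d<2\ell$ and $\gcd(\ell,d)=1$, the four exponents $0,\ell,d,d+\ell$ are pairwise distinct modulo $2\ell$, so the $\F_{q^{2\ell}}$-coefficients of $\varphi_y$ are unambiguous. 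It is also convenient to record the ``Dickson-type'' form $x\star y=\mathrm{Tr}_{q^{2\ell}/q^\ell}(xy^{q^\ell})+\mathrm{Tr}_{q^{2\ell}/q^\ell}\!\bigl(\beta(xy^{q^d}+x^{q^d}y)\bigr)\,\omega$, obtained from $\omega^{q^\ell}=-\omega$, which displays $\S$ as a quadratic algebra over $\F_{q^\ell}$. The nuclei are then found by the same device: write a general $\F_q$-linear map as $\mu(x)=\sum_{i=0}^{2\ell-1}c_i x^{q^i}$; for the middle nucleus impose $\varphi_y\circ\mu\in\cC$ for all $y$, i.e.\ that $\varphi_y\circ\mu$ has, as a $q$-polynomial in $x$, the shape of some $\varphi_z$; for the right nucleus impose instead $\mu\circ\varphi_y\in\cC$ for all $y$. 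Matching the coefficients of the relevant powers $x^{q^i}$ turns each requirement into a linear system in the $c_i$ whose solution set is, by Theorem \ref{thm:nucleipresemifield}, exactly $\cN_m(\S)$, respectively $\cN_r(\S)$.

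The middle-nucleus computation is the crux. One checks first that $\cN_m(\S)$ contains no scalar map by $\F_{q^2}$: since $\ell+d$ is odd (Condition (\ref{form-s1N})) exactly one of $\ell,d$ is even, and matching the coefficients of $x$ and of $x^{q^d}$ then forces any scalar map lying in $\cN_m(\S)$ to be a scalar by $\F_q$. Nevertheless the solution space is strictly larger than $F_q$: besides $F_q$ it contains a complementary $\F_q$-line of non-scalar maps preserving $\cC$ from the right, and $F_q$ together with this line forms a field isomorphic to $\F_{q^2}$ inside ${\rm End}_{\F_q}(\F_{q^{2\ell}})$ --- here again the parity condition $\ell+d$ odd is what makes the enlarged set close up under composition. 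This gives $|\cN_m(\S)|\ge q^2$. For the reverse bound, suppose $|\cN_m(\S)|>q^2$; pushing the coefficient comparison further and eliminating $z=z(y)$ leads to a relation that forces $\beta$ to be a square in $\F_{q^{2\ell}}$, contradicting Condition (\ref{form-beta1N}). Hence $|\cN_m(\S)|=q^2$.

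For the right nucleus the analogous comparison is easier: applying $\mu$ on the outside of $\varphi_y$ and using that the exponents $0,\ell,d,d+\ell$ are pairwise distinct forces $\mu$ itself to be a scalar map, after which $\mu\circ\varphi_y\in\cC$ together with the fact that $F$ is $\F_q$-semilinear but not $\F_{q^2}$-semilinear in $y$ (once more the parity mismatch between $\ell$ and $d$) gives $\mu\in F_q$. Thus $\cN_r(\S)=F_q$, of order $q$, and with the first paragraph this yields $|\cN_\ell(\S)|=|\cN_r(\S)|=q$ and, from $F_q\subseteq\cK_{r,\omega}(\S)\subseteq\cN_r(\S)$, also $|\cK_{r,\omega}(\S)|=q$. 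The main obstacle is clearly the middle nucleus: one has to both locate the ``hidden'' copy of $\F_{q^2}$ preserving $\cC$ --- whose very existence is what Condition (\ref{form-s1N}) provides --- and rule out anything larger, which is where the nonsquare $\beta$ of Condition (\ref{form-beta1N}) is used in an essential way; by contrast the right-nucleus (hence also the left-nucleus and the center) computation is of the same type but merely reproduces the scalar field $F_q$.
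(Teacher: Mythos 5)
Your overall strategy is the paper's: exploit commutativity to reduce left nucleus and center to the right nucleus (your first paragraph is fine), write $\varphi_y$ explicitly as a $q$-polynomial, and pin down $\cN_m(\S)$ and $\cN_r(\S)$ by imposing $\varphi_y\circ\psi\in\cC$, resp. $\psi\circ\varphi_y\in\cC$, and comparing coefficients. The problem is that the one computation the theorem actually hinges on --- the middle nucleus --- is not carried out but only described, and the description misstates how it resolves. For the lower bound you assert a ``complementary $\F_q$-line of non-scalar maps preserving $\cC$ from the right'' without exhibiting a single such map. Concretely, the extra elements are $x\mapsto b\xi x^{q^\ell}$ with $\xi^{q^{\ell+d}-1}=\beta^{1-q^\ell}$, and their existence is not free: one needs $\gcd(2\ell,\ell+d)=\gcd(\ell,d)=1$ (this is where $\ell+d$ odd enters, not in any ``closing up under composition'' issue) so that the solutions of $x^{q^{\ell+d}-1}=1$ in $\F_{q^{2\ell}}$ are exactly $\F_q^*$, together with the norm identity $(\beta^{q^\ell-1})^{\frac{q^{2\ell}-1}{q-1}}=1$ guaranteeing that $\beta^{1-q^\ell}$ is a $(q^{\ell+d}-1)$-st power. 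None of this appears in your proposal.

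The upper bound is worse: you claim that assuming $|\cN_m(\S)|>q^2$ and eliminating $z$ ``forces $\beta$ to be a square,'' contradicting (\ref{form-beta1N}), and that the nonsquare condition is used ``in an essential way.'' That is not a correct account of the mechanism, and I do not see how it could be made into one. If you split the identity $\varphi_y(\psi(x))=\varphi_z(x)$ along the $\F_{q^\ell}$-basis $\{1,\omega\}$, the first component alone already forces $\psi(x)=Ax+Bx^{q^\ell}$ and $z=A^{q^\ell}y+By^{q^\ell}$, and the second component then yields $\beta A=\beta A^{q^{\ell+d}}$ and $\beta B=\beta^{q^\ell}B^{q^{\ell+d}}$; these equations cap the solution set at $q\cdot q=q^2$ for \emph{every} nonzero $\beta$, square or not. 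The nonsquareness of $\beta$ (like condition (\ref{form-s1N})) is what makes $\cC$ a spread set in the first place; it plays no role in counting the solutions. So the step you identify as the essential use of (\ref{form-beta1N}) is a misattribution, and the coefficient analysis that actually delivers both the lower and the upper bound is precisely what is missing. (Your right-nucleus sketch has the same ``to be carried out'' character, but there the conclusion $\cN_r(\S)=F_q$ does follow from the analogous comparison, so the gap is less serious.)
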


\begin{proof}
Recall that  $0<d<2\ell$ with $\ell+d$ odd and $\gcd(\ell,d)=1$.
Set $x\circ_r y=xy^{q^r}+x^{q^r}y$ for any integer $0<r<2\ell$,
then
\begin{equation}\label{form:spreadsetBH}
\cC=\{\varphi_y:\ x\mapsto x\circ_\ell
y+[\beta(x\circ_dy)+\beta^{q^\ell}(x\circ_dy)^{q^\ell}]\omega\ |\
y\in\F_{q^{2\ell}}\}
\end{equation}
is the spread set associated with the presemifield
$\overline{BH}(q,\ell,d,\beta)$. In particular $\cC$ is contained
in the vector space $\mathbb
V={\mathrm{End}}_{\F_q}(\F_{q^{2\ell}})$.

By $(b)$ of Theorem \ref{thm:nucleipresemifield} and by $(A)$ of Theorem \ref{thm:nucleipresemifieldfromF}, the middle nucleus
of $\overline{BH}(q,\ell,d,\beta)$ is isomorphic to the largest
field, say $\cN_m(\S)$, contained in the space $\mathbb V$ and
satisfying the property $\varphi_y\circ\psi \in \cC$, for each
$\varphi_y \in \cC$ and for each $\psi\in\cN_m(\S)$. This is
equivalent to say that for each $ x,y\in\F_{q^{2\ell}}$ there
exists an element $z\in\F_{q^{2\ell}}$ such that
$\varphi_y(\psi(x))=\varphi_z(x)$, i.e. there exists
$z\in\F_{q^{2\ell}}$ such that
$$\psi(x)\circ_\ell y+[\beta(\psi(x)\circ_dy)+\beta^{q^\ell}(\psi(x)\circ_dy)^{q^\ell}]\omega=x\circ_\ell z+[\beta(x\circ_dz)+\beta^{q^\ell}(x\circ_dz)^{q^\ell}]\omega$$
for each $x,y\in\F_{q^{2\ell}}$. Since $\{1,\omega\}$ is an $\F_{q^\ell}$--basis of $\F_{q^{2\ell}}$, this is equivalent to show there
exists $z\in\F_{q^{2\ell}}$ such that for each $x,y\in\F_{q^{2\ell}}$
the following system
\begin{equation}\label{systemMiddleNucleus}
\left\{\begin{array}{ll} \psi(x)\circ_\ell y=x\circ_\ell z\\
\beta(\psi(x)\circ_d y)+\beta^{q^\ell}(\psi(x)\circ_d y)^{q^\ell} =
\beta(x\circ_d z)+\beta^{q^\ell}(x\circ_d z)^{q^\ell}
\end{array}
\right.
\end{equation}
admits solutions.
Since $\psi\in \V={\rm End}_{\F_q}(\F_{q^{2\ell}})$, we have $\psi(x)=\sum_{i=0}^{2\ell-1}a_ix^{q^i}$, with
$a_i\in\F_{q^{2\ell}}$ and looking at the first equation of System
(\ref{systemMiddleNucleus}) we get
$$(\sum_ia_ix^{q^i})y^{q^\ell}+(\sum_ia_i^{q^\ell}x^{q^{i+\ell}})y=xz^{q^\ell}+x^{q^\ell}z,$$
for each $x,y\in\F_{q^{2\ell}}$. Hence, reducing the above
equation modulo $x^{q^{2\ell}}-x$, we have that for each
$y\in\F_{q^{2\ell}}$ there exists $z\in\F_{q^{2\ell}}$ such that
$$a_iy^{q^\ell}+a_{i+\ell}^{q^\ell}y=\left\{\begin{array}{ll}0 & \mbox{if $i\ne
0,\ell$}\\
z^{q^\ell} & \mbox{if $i= 0$}\\
z & \mbox{if $i=\ell$}
\end{array}
\right.$$ where $a_i=a_j$ if and only if $i\equiv
j\,(mod\,2\ell)$. From the last equalities we get that $a_i=0$ for
each $i\ne 0,\ell$ and $z=a_0^{q^\ell}y+a_{\ell}y^{q^\ell}$.
Hence, from the first equation of System
(\ref{systemMiddleNucleus}), it follows that if
$\psi\in\cN_m(\S)$, then $\psi(x)=\psi_{A,B}(x)=Ax+Bx^{q^\ell}$,
with $A,B\in\F_{q^{2\ell}}$, and $z=A^{q^\ell}y+By^{q^\ell}$ for
each $y\in\F_{q^{2\ell}}$. Substituting these conditions in the
second equation of (\ref{systemMiddleNucleus}), we get that for
each $x,y\in\F_{q^{2\ell}}$ the following polynomial identity must
be satisfied
$$\beta((Ax+Bx^{q^\ell})\circ_d y)+\beta^{q^\ell}((Ax+Bx^{q^\ell})\circ_d
y)^{q^\ell}=\beta(x\circ_d(A^{q^\ell}y+By^{q^\ell}))+\beta^{q^\ell}(x\circ_d(A^{q^\ell}y+By^{q^\ell}))^{q^\ell}.$$
Reducing modulo $x^{q^{2\ell}}-x$ and equating the
coefficients of the obtained reduced polynomials we have that $A$
and $B$ must verify the system
$$\left\{\begin{array}{l}
\beta A=\beta A^{q^{\ell+d}}\\
\beta B=\beta^{q^\ell}B^{q^{\ell+d}}.
\end{array}
\right.$$ Note that, since $\gcd(2\ell,\ell+d)=\gcd(\ell,d)=1$ the
set of solutions in $\F_{q^{2\ell}}$ of the equations
$x^{q^{\ell+d}-1}=1$ is the set of nonzero elements of $\F_q$.
Hence, taking into account that $\beta\ne 0$ and $(\beta^{q^\ell-1})^{\frac{q^{2\ell}-1}{q-1}}=1$, we get
$$A\in\F_{q}\quad\quad\mbox{and}\quad\quad B=b\xi,$$ where
$b\in\F_{q}$ and $\xi$ is an element of $\F_{q^{2\ell}}$ such
that $\xi^{q^{\ell+d}-1}=\beta^{1-q^\ell}$. It follows that
$$\cN_m(\S)=\{\psi_{a,b\xi}:\,x\mapsto ax+b\xi x^{q^\ell}|\ \ a,b\in\F_{q}\},$$
and hence the middle nucleus of the presemifield $\overline{BH}(q,\ell,d,\beta)$
has order $q^{2}$.

\medskip

On the other hand, by $(a)$ of Theorem \ref{thm:nucleipresemifield} and
by $(B)$ of Theorem \ref{thm:nucleipresemifield}, the
right nucleus of $\overline{BH}(q,\ell,d,\beta)$ is isomorphic to
the largest field, say $\cN_r(\S)$, of the space $\mathbb
V=\mathrm{End}_{\F_q}(\F_{q^{2\ell}})$, whose elements $\phi
\colon x \mapsto \sum_{i=0}^{2\ell-1}a_ix^{q^i}$, with $a_i\in
\F_{q^{2\ell}}$, satisfy the property $\phi\circ\varphi_y \in
\cC$, for each $\varphi_y \in \cC$. Arguing as above we get that
$\cN_r(\S)=\{x\mapsto ax|\ \ a\in\F_{q}\}$. Since a presemifield
$\overline{BH}(q,\ell,d,\beta)$ is commutative, then its left
nucleus and its center have both order $q$.

Now the statement has been completely proven.
\end{proof}

\begin{cor}
Each $\cP$ semifield of order $q^{2\ell}$, $q$ odd prime and
$\ell>1$ odd, has middle nucleus of order $q^2$ and right nucleus,
left nucleus and center all of order $q$.
\end{cor}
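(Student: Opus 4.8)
The plan is to deduce the statement from Theorem~\ref{thm:nucleiBH} together with the fact, recalled above, that the orders of the three nuclei and of the center of a presemifield are invariant under isotopy (they are precisely the \emph{parameters} of the presemifield). First I would invoke the result of \cite{MaPoSub}, mentioned in the previous section, that every $\cP$ semifield of order $q^{2\ell}$ is isotopic to a Budaghyan--Helleseth presemifield. Comparing the defining multiplication of $\cP$ with Multiplication~(\ref{form:MultBH}), one sees that the exponent shift occurring in $\cP$ is $q^{2}$ (the term $xy^{q^{2}}+x^{q^{2}}y$), so the relevant member of the family is $\overline{BH}(q,\ell,2,\beta)$, the isotopy and the element $\beta$ being those exhibited in \cite{MaPoSub}.

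Next I would check that the parameters $(q,\ell,d,\beta)=(q,\ell,2,\beta)$ satisfy the hypotheses of Theorem~\ref{thm:nucleiBH}. Here $q$ is an odd prime, hence an odd prime power; $\ell>1$ and $0<d=2<2\ell$; moreover $\gcd(\ell,2)=1$ because $\ell$ is odd, and $\ell+d=\ell+2$ is odd for the same reason, so that condition~(\ref{form-s1N}) holds; finally $\beta$ is necessarily a nonsquare of $\F_{q^{2\ell}}$, since a $\overline{BH}$ presemifield is only defined under condition~(\ref{form-beta1N}). Theorem~\ref{thm:nucleiBH} then yields that $\overline{BH}(q,\ell,2,\beta)$ has middle nucleus of order $q^{2}$ and right nucleus, left nucleus and center all of order $q$.

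Finally I would transport these values back along the isotopy. Since the sizes of the nuclei and of the center are isotopy invariants, the $\cP$ semifield has middle nucleus of order $q^{2}$ and right, left nucleus and center all of order $q$; note that, being commutative, for such a semifield the left nucleus, the right nucleus and the center coincide, so only the middle nucleus and the center carry independent information.

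I expect no serious obstacle here: the two nontrivial ingredients---the explicit isotopy between a $\cP$ semifield and a $\overline{BH}$ presemifield from \cite{MaPoSub}, and the computation of the nuclei of $\overline{BH}$ in Theorem~\ref{thm:nucleiBH}---are already available. The only point requiring a little care is the bookkeeping that identifies the correct parameter $d=2$ and verifies that the admissibility conditions~(\ref{form-s1N}) and~(\ref{form-beta1N}) are met. Alternatively, one could bypass the isotopy entirely and compute the nuclei of $\cP$ directly from its defining polynomial via Theorems~\ref{thm:nucleipresemifield} and~\ref{thm:nucleipresemifieldfromF}, repeating verbatim the argument used in the proof of Theorem~\ref{thm:nucleiBH}, but the route through the isotopy is considerably shorter.
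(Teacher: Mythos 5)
Your proposal is correct and takes essentially the same route as the paper, whose entire proof is to cite Theorem~\ref{thm:nucleiBH} together with the isotopy result of \cite{MaPoSub} (that every $\cP$ semifield is isotopic to a $\cBH$ presemifield) and use isotopy invariance of the nuclei and center. Your additional bookkeeping (identifying $d=2$ and verifying conditions~(\ref{form-s1N}) and~(\ref{form-beta1N})) is just a more explicit spelling-out of the same argument.
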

\begin{proof}
It follows from Theorem \ref{thm:nucleiBH} and by \cite[Thm.
4.5]{MaPoSub}.
\end{proof}

Recall that in \cite[Cor. 8]{BuHe2010} the authors have proven
that some $\cBH$ presemifields of order $p^{2m}$, $p>3$ odd prime
and $m$ odd are isotopic nor to a Dickson semifield nor to a
Generalized twisted field and, obviously, nor to a presemifiel of
characteristic 3. By using the previous theorem we can now prove a
stronger result.

\begin{cor}\label{cor:BHnew}
A $\overline{BH}(q,2,d,\beta)$ presemifield of order $q^{4}$, $q$ an odd prime power, with Multiplication
(\ref{form:MultBH}), is isotopic to a Dickson semifield.

A $\overline{BH}(q,\ell,d,\beta)$ presemifield of order
$q^{2\ell}$, $q$ an odd prime power and $\ell>2$, with
Multiplication (\ref{form:MultBH}), is isotopic nor to a Dickson
semifield nor to a Generalized twisted field.
\end{cor}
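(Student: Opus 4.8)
The statement has two halves of quite different nature. The second is a short comparison of invariants resting on Theorem \ref{thm:nucleiBH}, and I would settle it first; the first requires an explicit computation, and I would prove it by coordinatizing $\F_{q^{4}}$ over $\F_{q^{2}}$ and exhibiting an isotopism onto a Dickson multiplication.

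For the second half, suppose that $\overline{BH}(q,\ell,d,\beta)$ with $\ell>2$ were isotopic to a Dickson semifield built on $\F_{Q^{k}}\times\F_{Q^{k}}$. The orders of the nuclei and of the center being isotopy invariants, Theorem \ref{thm:nucleiBH} says that this Dickson semifield has center of order $q$, middle nucleus of order $q^{2}$ and order $q^{2\ell}$, while by the description of the family it has center of order $Q$, middle nucleus of order $Q^{k}$ and order $Q^{2k}$. Equating the centers gives $Q=q$; equating the middle nuclei then gives $q^{k}=q^{2}$, so $k=2$; equating the orders gives $q^{2\ell}=Q^{2k}=q^{4}$, so $\ell=2$ --- contradicting $\ell>2$. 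Likewise, a Generalized twisted field has middle nucleus and center of the same order, whereas $\overline{BH}(q,\ell,d,\beta)$ has middle nucleus of order $q^{2}$ and center of order $q<q^{2}$; hence it is isotopic to no Generalized twisted field either. This proves the second half (and, as far as Generalized twisted fields are concerned, already for every $\ell>1$).

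For the first half I would set $\ell=2$, so that $0<d<4$ and $\gcd(2,d)=1$ force $d\in\{1,3\}$, the parity condition $\ell+d$ odd is automatic, and by (\ref{form-beta1N}) the element $\beta$ is a nonsquare in $\F_{q^{4}}$. I would fix $\omega$ with $\omega^{q^{2}}=-\omega$; then $\mu:=\omega^{2}$ is a nonsquare of $\F_{q^{2}}$, the pair $\{1,\omega\}$ is an $\F_{q^{2}}$-basis of $\F_{q^{4}}$, and a short check gives $\omega^{q}=\eta\omega$ for an $\eta\in\F_{q^{2}}$ with $\eta^{2}=\mu^{q-1}$, so that for $x=x_{0}+x_{1}\omega$ with $x_{i}\in\F_{q^{2}}$ one has $x^{q^{d}}=x_{0}^{q}+\varepsilon\eta x_{1}^{q}\omega$, where $\varepsilon=1$ for $d=1$ and $\varepsilon=-1$ for $d=3$. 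Using the identity $z+z^{q^{2}}=\mathrm{Tr}_{q^{4}/q^{2}}(z)$, Multiplication (\ref{form:MultBH}) becomes
\[
x\star y=\mathrm{Tr}_{q^{4}/q^{2}}\!\big(xy^{q^{2}}\big)+\mathrm{Tr}_{q^{4}/q^{2}}\!\big(\beta(xy^{q^{d}}+x^{q^{d}}y)\big)\,\omega ,
\]
and expanding in the basis $\{1,\omega\}$ one finds that the first summand equals $2(x_{0}y_{0}-\mu x_{1}y_{1})$, while the second is an $\F_{q}$-bilinear form in $(x_{0},x_{1})$ and $(y_{0},y_{1})$, symmetric in its two arguments, with coefficients explicit in $\beta,\mu,\eta$. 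By Theorem \ref{thm:nucleiBH} the middle nucleus of $\overline{BH}(q,2,d,\beta)$ is isomorphic to $\F_{q^{2}}$, given by the maps $x\mapsto Ax+Bx^{q^{2}}$ found there; after a suitable change of variables adapted to this middle nucleus, the multiplication acquires the Dickson shape $(u,v)\star(w,z)=(uw+j(vz)^{q},\,uz+vw)$ on $\F_{q^{2}}\times\F_{q^{2}}$ for some $j\in\F_{q^{2}}$, and $j$ is a nonsquare --- a fact that can be read off from the relation between $j$ and $\beta$, and that is in any case forced a posteriori since the middle nucleus has order $q^{2}<q^{4}$, so the resulting semifield is not a field. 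Thus $\overline{BH}(q,2,d,\beta)$ is isotopic to a Dickson semifield.

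The only delicate step is this last reduction to Dickson form: one must keep track of the Frobenius twists and the signs --- the two cases $d=1$ and $d=3$, which correspond to $x\mapsto x^{q}$ and its inverse on $\F_{q^{2}}$, being handled uniformly through $\varepsilon$ --- then exhibit the explicit isotopism producing the multiplication above and express $j$ in terms of $\beta$ so as to conclude that it is a nonsquare; the orders $q^{2}$ and $q$ of the middle nucleus and of the center, supplied by Theorem \ref{thm:nucleiBH}, provide a convenient check on the final multiplication.
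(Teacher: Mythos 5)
Your second paragraph (the case $\ell>2$) is correct and is essentially the paper's own argument: Theorem \ref{thm:nucleiBH} gives parameters $(|\cK|,|\N_m|,\mbox{order})=(q,q^2,q^{2\ell})$, and matching these against the Dickson parameters $(Q,Q^k,Q^{2k})$ and the twisted-field parameters $(Q,Q,Q^t)$ rules both families out; this is exactly the comparison the paper performs via its table.

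The first half, however, has a genuine gap. For $\ell=2$ the paper does \emph{not} compute an explicit isotopism: it observes that, by Theorem \ref{thm:nucleiBH}, the semifield is commutative, $2$-dimensional over its middle nucleus and $4$-dimensional over its center, and then invokes the classification of such semifields (equivalently, of semifield flocks of the quadratic cone in odd characteristic, \cite{Thas1987}, \cite{BlLaBa2003}), which says that any such proper commutative semifield is isotopic to a Dickson semifield. Your route replaces this deep classification by a direct computation, but the decisive step --- ``after a suitable change of variables adapted to this middle nucleus, the multiplication acquires the Dickson shape'' --- is precisely the content of the claim and is nowhere carried out. Writing $x=x_0+x_1\omega$ and expanding $\mathrm{Tr}_{q^4/q^2}(xy^{q^2})$ and $\mathrm{Tr}_{q^4/q^2}(\beta(xy^{q^d}+x^{q^d}y))$ only exhibits a pair of symmetric $\F_q$-bilinear forms; it does not by itself produce invertible maps $g_1,g_2,g_3$ carrying this multiplication to $(u,v)\star(w,z)=(uw+j(vz)^q,\,uz+vw)$. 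Note also that the middle nucleus found in Theorem \ref{thm:nucleiBH} consists of the maps $x\mapsto ax+b\xi x^{q^2}$ with $\xi^{q^{2+d}-1}=\beta^{1-q^2}$, not of the scalar maps of $\F_{q^2}$, so ``coordinates adapted to the middle nucleus'' is itself a nontrivial normalization that your sketch glosses over. As written, your argument for $\ell=2$ would only become a proof if you either carry out this isotopism explicitly (expressing $j$ in terms of $\beta$ and checking invertibility of the maps used), or fall back, as the paper does, on the flock classification; the a posteriori remark that $j$ must be a nonsquare is fine once the Dickson shape is actually reached, but it cannot substitute for reaching it.
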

\begin{proof}
If $\ell=2$, from Theorem \ref{thm:nucleiBH} each
$\overline{BH}(q,\ell,d,\beta)$ presemifield is 2--dimensional
over its middle nucleus and 4--dimensional over its center. Hence
it is isotopic to a Dickson semifield (see \cite{Thas1987}, \cite{BlLaBa2003}) .  If
$\ell>2$, by Theorem \ref{thm:nucleiBH}, comparing the dimensions
of the involved presemifields over their middle nucleus and over
their center (see Table \ref{eqtable}), we get the assertion.
\end{proof}

\subsection{The nuclei of Bierbrauer presemifields}

Set $h=\gcd(s,t)$, then $s=hm$ and $t=hn$ with $\gcd(m,n)=1$. Then
the multiplication of a Bierbrauer presemifield
$\B=(\F_{q^{4m}},+,\star)$, $q=p^h$ odd prime power, can be
rewritten as
\begin{equation}\label{form:multiplB}
x\star y=y^{q^n}x+yx^{q^n}-v(y^{q^{m+n}}x^{q^{3m}}+y^{q^{3m}}x^{q^{m+n}}),
\end{equation}
where $v=u^{q^{m}-1}$, $u$ a primitive element of $\F_{q^{4m}}$, and
$0<n<4m$ such that $\frac {2m}{\gcd(2m,n)}$ is odd and $q^m\equiv q^n\equiv\,1\,(mod\,4)$.

\begin{remark}\label{rem:n-even}
{\rm Since $\frac {2m}{\gcd(2m,n)}$ is odd and $\gcd(m,n)=1$, $n$
is even and $m$ is odd. It follows that Condition $q^n\equiv\,1\,(mod\,4)$ is satisfied for each $q$ odd,
whereas Condition $q^m\equiv\,1\,(mod\,4)$ is equivalent to
$q\equiv\,1\,(mod\,4)$.}
\end{remark}

In \cite[Thm. 6]{Bierbrauer2010} it has been proven that a
$\cB$ presemifield of order $q^{4m}$, $q$ odd prime power, has
middle nucleus containing a field of order $q^2$ and center containing $\F_q$.
Moreover, if $q$ is prime, $n=2$ and $m>1$ odd, a
$\cB$ presemifield is not quadratic over its middle nucleus
(\cite[Thm. 7]{Bierbrauer2010}). Finally, if $q=p$ is an odd
prime, $m=3$ and $n=2$, a $\cB$ presemifield has middle nucleus
of order $p^2$ and center of order $p$ (\cite[Thm.
6]{Bierbrauer2009}).

Here we determine the orders of the nuclei and the center of the
involved presemifields, with no restriction on $n$, $m$ or $q$.

\begin{theorem}\label{thm:nucleiB}
A Bierbrauer presemifield $\B=(\F_{q^{4m}},+,\star)$, $q$ an odd
prime power, with Multiplication (\ref{form:multiplB}), has middle
nucleus of order $q^2$ and center of order $q$.
\end{theorem}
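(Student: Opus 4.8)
The plan is to run, almost verbatim, the argument of Theorem~\ref{thm:nucleiBH}. Since every exponent occurring in \textup{(\ref{form:multiplB})} is a power of $q$, the spread set
$$\cC=\{\varphi_y:\ x\mapsto x\star y\ |\ y\in\F_{q^{4m}}\}$$
lies in $\V=\mathrm{End}_{\F_q}(\F_{q^{4m}})$, and $\B$ is commutative, so $\cC=\cC^d$. By Corollary~\ref{cor:nucleipresemifield}, $F_q$ is contained in every nucleus and in the center, and all of these lie in $\V$; moreover $\B$ is isotopic to a commutative semifield, so $\cN_\ell(\B)$, $\cN_r(\B)$ and the center have a common order (this is why only the middle nucleus and the center appear in the statement). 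Thus it is enough to compute $\cN_m(\B)$ and $\cN_r(\B)$.

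For the middle nucleus I would invoke $(b)$ of Theorem~\ref{thm:nucleipresemifield}: $\cN_m(\B)$ is the largest field of maps $\psi$ such that for each $y$ there is $z$ with $\psi(x)\star y=x\star z$ for all $x$. First, using that $n$ is even whereas $3m$ and $m+n$ are odd (Remark~\ref{rem:n-even}), a direct check gives $F(\lambda x,y)=F(x,\lambda y)$ for all $\lambda\in\F_{q^2}$, so $(A)$ of Theorem~\ref{thm:nucleipresemifieldfromF} already yields $F_{q^2}\subseteq\cN_m(\B)\subseteq\mathrm{End}_{\F_{q^2}}(\F_{q^{4m}})$. For the reverse inclusion, write $\psi(x)=\sum_{i=0}^{4m-1}a_ix^{q^i}$, expand $\psi(x)\star y=x\star z$, and reduce modulo $x^{q^{4m}}-x$. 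The coefficient of $x^{q^j}$ on the left is a $q$--polynomial in $y$ with exponent set $S=\{0,n,3m,m+n\}$, while on the right this coefficient vanishes whenever $j\notin S$. As $\gcd(m,n)=1$ with $m$ odd and $n$ even, the four exponents of $S$ are pairwise distinct (the only degeneracy, $n=2m$, forces $m=1$ and is handled separately), so the vanishing for $j\notin S$ forces $a_j=a_{j-n}=a_{j+m}=a_{j-m-n}=0$ for all such $j$; an elementary check then shows $S\cap(S-n)=\{0\}$, hence $a_i=0$ for $i\neq0$, i.e. $\psi(x)=a_0x$. Finally, equating the coefficients of $x^{q^0}$ and $x^{q^n}$ gives $z=a_0^{q^n}y$ and $a_0^{q^{2n}}=a_0$, so $a_0\in\F_{q^4}$ (here $\gcd(2n,4m)=4$), and equating the coefficient of $x^{q^{3m}}$ gives in addition $a_0^{q^{2m}}=a_0$; since $m$ is odd, $a_0\in\F_{q^4}\cap\F_{q^{2m}}=\F_{q^2}$. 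Therefore $\cN_m(\B)=F_{q^2}$ has order $q^2$.

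For the right nucleus the scheme is identical, now via $(a)$ of Theorem~\ref{thm:nucleipresemifield}: $\phi\in\cN_r(\B)$ iff for each $y$ there is $z$ with $\phi(x\star y)=x\star z$ for all $x$. Writing $\phi(x)=\sum_ib_ix^{q^i}$ and expanding, the coefficient of $x^{q^j}$ of $\phi(x\star y)$ is again a $q$--polynomial in $y$ with four (generically distinct) exponents, in which the occurring indices are exactly $b_j,b_{j-n},b_{j+m},b_{j-m-n}$, so the same bookkeeping forces $\phi(x)=b_0x$. This time, equating the coefficients of $x^{q^n}$ and $x^{q^{3m}}$ gives $z=b_0y$ and $b_0^{q^{m+n}}=b_0$; since $m+n$ is odd and coprime to $m$ we have $\gcd(m+n,4m)=1$, hence $b_0\in\F_q$. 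Thus $\cN_r(\B)=F_q$ has order $q$, and since $F_q$ is contained in the center and the center is a subfield of $\cN_r(\B)$, the center of $\B$ also has order $q$.

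The main obstacle is the coefficient bookkeeping shared by the two computations: after reducing modulo $x^{q^{4m}}-x$ one must determine exactly which indices of $S=\{0,n,3m,m+n\}$ survive in the $q$--polynomial coefficient of each $x^{q^j}$, verify the pairwise distinctness of these exponents from $m$ odd, $n$ even and $\gcd(m,n)=1$, and check $S\cap(S-n)=\{0\}$ so that only the scalar map $a_0x$ (resp. $b_0x$) remains. The concluding descent then rests on the elementary $\gcd$ computations $\gcd(2n,4m)=4$ and $\gcd(m+n,4m)=1$, which distinguish $\F_{q^2}$ from $\F_q$, plus a routine separate treatment of the degenerate case $m=1$ (where $S$ has only three elements but the same conclusions hold).
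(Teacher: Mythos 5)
Your proposal is correct and follows essentially the same route as the paper: establish $F_{q^2}\subseteq\cN_m(\B)$ via Theorem \ref{thm:nucleipresemifieldfromF}(A), expand $\psi(x)\star y=x\star z$ (resp. $\phi(x\star y)=x\star z$) as $q$-polynomials, reduce modulo $x^{q^{4m}}-x$, and use the coefficient constraints together with the gcd conditions on $m,n$ to force $\psi(x)=a_0x$ with $a_0\in\F_{q^2}$ and $\phi(x)=b_0x$ with $b_0\in\F_q$, concluding for the center by commutativity. The only differences are cosmetic bookkeeping (you parametrize $\psi$ by all $q$-powers instead of the paper's restriction to $\F_{q^2}$-linear maps) and your explicit flagging of the degenerate case $n=2m$, i.e. $m=1$, where $x^{q^{3m}}=x^{q^{m+n}}$ — a case the paper's distinctness claim silently skips — so you are, if anything, slightly more careful.
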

\begin{proof}
Let $\cC=\{\varphi_y:\,x\mapsto x\star y| \
y\in\F_{q^{4m}}\}\subset {\mathbb V}={\rm
End}_{\F_q}(\F_{q^{4m}})$ be the spread set associated with the
presemifield $\B$. Note that if $x\star y=F(x,y)$, then $F(\lambda x,y)=F(x,\lambda y)$ for each $\lambda\in\F_{q^2}$. Hence, by $(A)$ of Theorem \ref{thm:nucleipresemifieldfromF}, we get that
the middle nucleus of $\B$ is the largest field $\cN_m(\B)$ contained the field of maps $F_{q^2}$ and
contained in the space ${\rm End}_{\F_{q^2}}(\F_{q^{4m}})$ satisfying the property
$\varphi_y\circ\psi \in\cC$, for each $\varphi_y \in \cC$ and $\psi\in\cN_m(\bB)$. This is
equivalent to say that for each $ x,y\in\F_{q^{4m}}$ there exists
an element $z\in\F_{q^{4m}}$ such that
$\varphi_y(\psi(x))=\varphi_z(x)$, where
$\psi(x)=\sum_{i=0}^{2m-1}a_ix^{q^{2i}}$, with $a_i\in\F_{q^{4m}}$, i.e.
\begin{eqnarray}\label{form:bierb2}
y\sum_ia_i^{q^n}x^{q^{n+{2i}}}+y^{q^n}\sum_ia_ix^{q^{2i}}-v\bigg(y^{q^{m+n}}\sum_ia_i^{q^{3m}}x^{q^{3m+2i}}+y^{q^{3m}}\sum_ia_i^{q^{m+n}}x^{q^{2i+m+n}}\bigg)
=\nonumber\\
=z^{q^n}x+zx^{q^n}-v(z^{q^{m+n}}x^{q^{3m}}+z^{q^{3m}}x^{q^{m+n}}),
\end{eqnarray}
for each $x,y\in\F_{q^{4m}}$.

Now, reduce the above polynomials modulo $x^{q^{4m}}-x$ and equate the
coefficients. Since
the monomials $x,x^{q^n},x^{q^{3m}},x^{q^{m+n}}$ are pairwise
distinct modulo $x^{q^{4m}}-x$, $n+m$ is odd and $\gcd(n,m)=1$, we get
$$
a_{i-\frac n2}^{q^n}y+a_{i}y^{q^n}=
\left\{\begin{array}{lll}
z^{q^n}& \mbox{if $i=0$}& (a)\\
z& \mbox{if $2i=n$} &(b)\\
0 & \mbox{otherwise}
\end{array}
\right.
$$
for all $y\in\F_{q^{4m}}$, where $a_j=a_{j'}$ if and only if
$j\equiv j'(mod\, 4m)$.

\noindent From the previous equalities we get
\begin{equation}\label{form:ai}
a_i=a_{i-\frac n2}=0\quad
\forall\,i\notin\bigg\{0,\frac n2\bigg\} \mbox{\ \ modulo $4m$}.
\end{equation}
 Combining $(a)$ and $(b)$ we get
\begin{equation}\label{form:ai1}
a_{4m-\frac n2}^{q^{n}}y+a_0y^{q^{n}}=a_0^{q^{2n}}y^{q^n}+a_{\frac n2}^{q^n}y^{q^{2n}}.
\end{equation}
Hence
\begin{equation}\label{form:bierb3}
a_{\frac n2}=a_{4m-\frac n2}=0
\end{equation}
and
\begin{equation}\label{form:bierb4}
a_0=a_0^{q^{2n}}\quad\mbox{and}\quad z=a_0^{q^n}y.
\end{equation}

By using (\ref{form:ai}), (\ref{form:bierb3}) and (\ref{form:bierb4}) in Equation (\ref{form:bierb2}), we get that for each $x,y\in\F_{q^{4m}}$ there exists $z\in\F_{q^{4m}}$ such that
$$
a_0y^{q^{n}}x+a_0^{q^{n}}yx^{q^n}-v(a_0^{q^{3m}}y^{q^{m+n}}x^{q^{3m}}+a^{q^{m+n}}y^{q^{3m}}x^{q^{m+n}})=$$
$$=
a_0^{q^{2n}}y^{q^{n}}x+a_0^{q^n}yx^{q^n}-v(a_0^{q^{m+2n}}y^{q^{m+n}}x^{q^{3m}}+a_0^{q^{3m+n}}y^{q^{3m}}x^{q^{m+n}}),
$$
which implies, taking into account
that $y$, $y^{q^{n}}$, $y^{q^{m+n}}$ and $y^{q^{3m}}$
are pairwise distinct modulo $y^{q^{4m}}-y$, that
$$a_0=a_0^{q^{2m}}.$$
From the last equality and from (\ref{form:bierb4}), since $\gcd(n,m)=1$, it follows
$\cN_m(\B)=F_{q^2}$.

\medskip

On the other hand, the right nucleus of $\B$ is the largest field
$\cN_r(\B)$ of the space $\mathbb
V=\mathrm{End}_{\F_q}(\F_{q^{4m}})$, whose elements $\phi \colon x
\mapsto \sum_{i=0}^{4m-1}a_ix^{q^i}$, with $a_i\in \F_{q^{4m}}$,
satisfy the property $\phi\circ\varphi_y \in \cC$, for each
$\varphi_y \in \cC$. Arguing as above we get $\cN_r(\B)=F_{q}$.

\end{proof}

In \cite[Thm. 7]{Bierbrauer2010}, it has been shown that a
$\cB$ presemifield of order $q^{4m}$, $q=p$ an odd prime, $n=2$
and $m>1$ odd, is not isotopic to a Generalized twisted
field. By using the previous theorem we can now prove the
following

\begin{cor}\label{cor:Bnew}
A Bierbrauer presemifield of order $q^4$ (i.e., $m=1$), $q$ an odd
prime power, with Multiplication (\ref{form:multiplB}) is isotopic
to a Dickson semifield.

A Bierbrauer presemifield of order $q^{4m}$, $q$ an odd prime
power and $m>1$ odd, with Multiplication (\ref{form:multiplB}), is
isotopic neither to a Dickson semifield, nor to a Generalized
twisted field and to any of the known commutative presemifields of characteristic 3 (see Table \ref{eqtable}).
\end{cor}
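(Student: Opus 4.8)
The plan is to run the same argument as in the proof of Corollary \ref{cor:BHnew}. All the input is supplied by Theorem \ref{thm:nucleiB}, which tells us that a Bierbrauer presemifield $\B=(\F_{q^{4m}},+,\star)$ has middle nucleus of order $q^{2}$ and center of order $q$, and by Remark \ref{rem:n-even}, which forces $m$ to be odd. The two assertions will then follow, respectively, from a structural classification result and from a comparison of these isotopy invariants with the corresponding data of the other known families collected in Table \ref{eqtable}.

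For the first assertion ($m=1$): by Theorem \ref{thm:nucleiB} a Bierbrauer presemifield of order $q^{4}$ has middle nucleus of order $q^{2}$, hence it is $2$--dimensional over its middle nucleus; by the classification of commutative semifields that are $2$--dimensional over their middle nucleus (Thas \cite{Thas1987}, Blokhuis--Lavrauw--Ball \cite{BlLaBa2003}) it is isotopic to a Dickson semifield. This is exactly the $\ell=2$ step of Corollary \ref{cor:BHnew}.

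For the second assertion ($m>1$, and $m$ odd by Remark \ref{rem:n-even}): Theorem \ref{thm:nucleiB} gives that $\B$ is $2m$--dimensional over its middle nucleus (of order $q^{2}$) and $4m$--dimensional over its center (of order $q$), and I would compare these with the entries of Table \ref{eqtable}. A Dickson semifield is $2$--dimensional over its middle nucleus, while $2m\geq 6$, so $\B$ is not isotopic to one. A Generalized twisted field has middle nucleus and center of equal order, while here they have orders $q^{2}$ and $q$. For the known characteristic $3$ families $\cG$, $\cCG$, $\cCM/\cDY$, $\cPW/\cBLP$, $\cCHK$: if $q$ has characteristic $3$ and $q>3$, then $\B$ has center of order $q>3$ whereas each of these has center of order $3$; and if $q=3$, then $\B$ has order $3^{4m}$, middle nucleus of order $9$ and center of order $3$, and among those five families only $\cCHK$ has middle nucleus of order $9$ --- but $\cCHK$ has order $3^{8}$, which would force $m=2$, against the parity of $m$, while the remaining four already differ in the order of the middle nucleus.

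The step I expect to be the most delicate is the last one, the comparison with the Coulter--Henderson--Kosick presemifield $\cCHK$: here the orders of the middle nucleus and of the center coincide with those of $\B$, so non-isotopy cannot be read off from the nuclei alone and instead has to be deduced from the order of the ambient field together with the constraint that $m$ is odd, which is precisely Remark \ref{rem:n-even}. Everything else is routine bookkeeping of isotopy invariants; the substantial work has already been carried out in proving Theorem \ref{thm:nucleiB}.
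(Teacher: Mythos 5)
Your proposal is correct and follows essentially the same route as the paper: the $m=1$ case via Theorem \ref{thm:nucleiB} together with the classification of commutative semifields $2$--dimensional over their middle nucleus (\cite{Thas1987}, \cite{BlLaBa2003}), and the $m>1$ case by comparing the orders of the middle nucleus and center with the entries of Table \ref{eqtable}. You merely spell out the table comparison (including the $\cCHK$ case, where the ambient order $3^{8}$ versus $q^{4m}$ with $m>1$ odd settles it) that the paper leaves implicit, and you correctly cite Theorem \ref{thm:nucleiB} where the paper's proof has a slip referring to Theorem \ref{thm:nucleiBH}.
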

\begin{proof}
From Theorem \ref{thm:nucleiBH} a $\cB$ presemifield of order
$q^4$ is 2--dimensional over its middle nucleus and 4--dimensional
over its center. Hence it is isotopic to a
Dickson semifield (see \cite{Thas1987} and \cite{BlLaBa2003}).  If $m>1$ odd, by Theorem \ref{thm:nucleiBH},
comparing the orders and the parameters of the involved presemifields over their
middle nucleus and over their center (see Table \ref{eqtable}), we get the
assertion.
\end{proof}

\begin{cor}\label{cor:Bnew1}
A $\overline{BH}(q,\ell,d,\beta)$ presemifield of order
$q^{2\ell}$, $q$ an odd prime power and $\ell\ne 2k$ with $k$ odd,
defined by Multiplication (\ref{form:MultBH}), is not isotopic to
any Bierbrauer presemifield with Multiplication
(\ref{form:multiplB}).
\end{cor}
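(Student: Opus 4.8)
The plan is to argue by contradiction, reducing the claim to a comparison of the three isotopy‑invariant quantities computed above for the two families: the total order, the order of the middle nucleus, and the order of the center.

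First I would suppose that $\S:=\overline{BH}(q,\ell,d,\beta)$, of order $q^{2\ell}$, is isotopic to some Bierbrauer presemifield $\B$ with Multiplication (\ref{form:multiplB}). I would begin by noting that the hypothesis on $\ell$ already forces $\ell\ge 3$: the defining constraints $0<d<2\ell$, $\gcd(\ell,d)=1$ and $\ell+d$ odd exclude $\ell=1$, while $\ell=2=2\cdot 1$ is excluded because $1$ is odd. In particular $\ell>1$, so Theorem \ref{thm:nucleiBH} applies and tells us that $\S$ has middle nucleus of order $q^{2}$ and center of order $q$. On the Bierbrauer side I would use Remark \ref{rem:n-even} to write $\B=(\F_{Q^{4m}},+,\star)$ with $Q$ an odd prime power and $m$ odd, and Theorem \ref{thm:nucleiB} to record that $\B$ has middle nucleus of order $Q^{2}$ and center of order $Q$.

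Next I would invoke the invariance under isotopy of the order and of the parameters (Section 2). Equating the center orders of $\S$ and $\B$ gives $q=Q$; equating their total orders then gives $q^{2\ell}=Q^{4m}=q^{4m}$, hence $\ell=2m$. Since $m$ is odd by Remark \ref{rem:n-even}, this writes $\ell$ in the prohibited form $2k$ with $k=m$ odd, contradicting the hypothesis. Therefore $\S$ is isotopic to no Bierbrauer presemifield defined by (\ref{form:multiplB}), which is the assertion.

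I do not expect a genuine obstacle here once Theorems \ref{thm:nucleiBH} and \ref{thm:nucleiB} are available; the only points needing a little care are (i) the passage $q=Q$, which is precisely what invariance of the center order supplies and which lets one cancel and reach $\ell=2m$, and (ii) the preliminary observation that the excluded value $\ell=2$ cannot creep back in, so that Theorem \ref{thm:nucleiBH} is legitimately applicable.
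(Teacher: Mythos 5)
Your proposal is correct and follows essentially the same route as the paper, whose one-line proof likewise compares the isotopy-invariant parameters (via Theorems \ref{thm:nucleiBH} and \ref{thm:nucleiB} and Table \ref{eqtable}): equal center orders force $q=Q$, equal total orders force $\ell=2m$ with $m$ odd by Remark \ref{rem:n-even}, contradicting $\ell\ne 2k$ with $k$ odd. Your added checks that $\ell=1$ is excluded by the conditions on $d$ and that $\ell=2$ is excluded by hypothesis are sound and merely make explicit what the paper leaves implicit.
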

\begin{proof}
The assertion again follows by comparing the dimensions of the involved
presemifields over their middle nucleus and over their center (see Table \ref{eqtable}).
\end{proof}

\subsection{The nuclei of Zha--Kyureghyan--Wang  presemifields}

Set $g=\gcd(s,t)$, then $s=hg$ and $t=ng$ with $\gcd(h,n)=1$. Then the multiplication of a Zha--Kyureghyan--Wang  presemifield $\mathbb{ZKW}=(\F_{q^{3h}},+,*)$, $q=p^g$ odd prime power, can be rewritten as
\begin{equation}\label{form:multiplZKW}
x\star y=y^{q^n}x+yx^{q^n}-v(y^{q^{h+n}}x^{q^{2h}}+y^{q^{2h}}x^{q^{h+n}}),
\end{equation}
where $v=u^{q^{h}-1}$, $u$ a primitive element of $\F_{q^{3h}}$, and
$0<n<3h$ such that $h$ is odd. Hence, by Corollary \ref{cor:nucleipresemifield}, all the nuclei and the center of a $\cZKW$ presemifield contain a field of order $q$. Moreover this multiplication gives rise to a $\cZKW$ presemifield if either
\begin{equation}\label{form:cinesi1}
h+n\equiv\,0\,(mod\,3)
\end{equation}
or
\begin{equation}\label{form:bierb1}
q\equiv\,1\,(mod\,3).
\end{equation}

If $h=1$ and $n=2$, by the form of Multiplication (\ref{form:multiplZKW}) it is clear that a $\cZKW$ presemifield of order $q^3$ is isotopic to a Generalized twisted field.

If $h=n=1$ only Condition (\ref{form:bierb1}) can be realized. Arguing as in Theorem \ref{thm:nucleiB} and taking into account that $v^{q^2+q+1}=1$ and that $q\equiv\,1\,(mod\,3)$, it can be proven that in this case the nuclei and the center have all order exactly $q$. Hence also in this case, by the classification result of Menichetti \cite{Menichetti1978}, the $\cZKW$ presemifield is isotopic to a Generalized twisted field.

More generally, in \cite[Thm. 10]{LuMaPoTr2011}, using an isotopy form, it has been proven that a $\cZKW$ presemifield of order $q^{3h}$, $h>1$ odd, satisfying Condition (\ref{form:cinesi1}) has middle nucleus of order $q$. Using similar techniques as in Theorem \ref{thm:nucleiB} it can be proven that also the center of a $\cZKW$ presemifield has order $q$. In both cases the arguments do not involve the congruences (\ref{form:cinesi1}) and (\ref{form:bierb1}). Hence we obtain

\begin{theorem}\label{thm:nucleiZKW}
A $\cZKW$ presemifield of order $q^{3h}$, $q$ an odd prime power and $h$ an odd integer, with Multiplication (\ref{form:multiplZKW}), has middle nucleus and center both of order $q$. \qed
\end{theorem}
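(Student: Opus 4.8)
The plan is to compute the middle nucleus and the center of a $\cZKW$ presemifield directly from its spread set, following the same scheme as in the proof of Theorem \ref{thm:nucleiB}, and to observe at each step that the congruences \eqref{form:cinesi1} and \eqref{form:bierb1} play no role.

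First I would set $\cC=\{\varphi_y:\,x\mapsto x\star y\mid y\in\F_{q^{3h}}\}\subset \V={\rm End}_{\F_q}(\F_{q^{3h}})$, the spread set of $\mathbb{ZKW}$, where $x\star y=F(x,y)$ with $F$ as in \eqref{form:multiplZKW}. By Corollary \ref{cor:nucleipresemifield} the center and all nuclei already contain $F_q$, since $F$ is a $q$-polynomial; so the work is to prove the reverse inclusion $\cN_m(\mathbb{ZKW})\subseteq F_q$ and $\cK(\mathbb{ZKW})\subseteq F_q$. For the middle nucleus I would invoke $(b)$ of Theorem \ref{thm:nucleipresemifield} together with $(A)$ of Theorem \ref{thm:nucleipresemifieldfromF}: $\cN_m$ is the largest field of maps $\psi$ with $\varphi_y\circ\psi\in\cC$ for all $y$, i.e. for each $x,y$ there is $z$ with $\varphi_y(\psi(x))=\varphi_z(x)$. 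Writing $\psi(x)=\sum_{i=0}^{3h-1}a_ix^{q^i}$, substituting into $F(\psi(x),y)=F(x,z)$, reducing modulo $x^{q^{3h}}-x$, and equating coefficients of the distinct monomials $x,x^{q^n},x^{q^{2h}},x^{q^{h+n}}$ (here $n+h$ odd and $\gcd(h,n)=1$ guarantee these four exponents are pairwise distinct), I would force $a_i=0$ for all but the two indices $0$ and $n/2$ (or their shifts), obtain a relation between $a_0$ and $a_{n/2}$, and then kill $a_{n/2}$; this yields $\psi(x)=a_0x$ with $a_0^{q^{2n}}=a_0$, and a further comparison in the $y$-variable (using that $y,y^{q^n},y^{q^{h+n}},y^{q^{2h}}$ are pairwise distinct) forces $a_0^{q^h}=a_0$, hence $a_0\in\F_{\gcd(2n,h)\text{-power}}$; since $\gcd(h,n)=1$ and $h$ odd this gives $a_0\in\F_q$, so $\cN_m(\mathbb{ZKW})=F_q$.

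For the center I would use point $(d)$ of Theorem \ref{thm:nucleipresemifield}: since $\mathbb{ZKW}$ is commutative, $\cK$ equals the largest subfield $\cK_{m,\sigma}$ of $\cN_m=F_q$ satisfying \eqref{eqMiCenter}, and as $F_q\subseteq \cK_{m,\sigma}$ already by Corollary \ref{cor:nucleipresemifield}, equality $\cK(\mathbb{ZKW})=F_q$ is immediate once $\cN_m(\mathbb{ZKW})=F_q$ is established. (Alternatively one may run the same coefficient-comparison argument directly on the center's defining relation, exactly as indicated in the discussion preceding the statement, using $v^{q^{2h}+q^h+1}=1$; either route gives center of order $q$.)

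The main obstacle is the bookkeeping in the coefficient comparison: one must carefully track the exponents modulo $3h$, verify that the four relevant monomials in $x$ (and separately in $y$) really are pairwise distinct under the hypotheses $h$ odd, $\gcd(h,n)=1$, $n+h$ odd, and correctly combine the two surviving equations $(a)$ and $(b)$ to eliminate the spurious coefficient $a_{n/2}$. This is precisely the computation carried out in Theorem \ref{thm:nucleiB} for the order-$q^{4m}$ case; the point to emphasize — and the reason the theorem holds with no restriction — is that nowhere in this argument do the congruences \eqref{form:cinesi1} or \eqref{form:bierb1} enter: they are needed only to guarantee that $F$ defines a \emph{presemifield} (i.e. that every nonzero $\varphi_y$ is invertible), not to pin down the nuclei. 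Since the middle-nucleus computation for $h>1$ satisfying \eqref{form:cinesi1} was already done in \cite[Thm. 10]{LuMaPoTr2011} and the small cases $h=1$ were handled above, the remaining content is to note this uniformity, and the proof is complete. \qed
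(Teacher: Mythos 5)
Your overall plan is the same as the paper's: redo the coefficient comparison of Theorem \ref{thm:nucleiB} for the spread set of (\ref{form:multiplZKW}) and observe that neither (\ref{form:cinesi1}) nor (\ref{form:bierb1}) is ever used. However, the computational details you describe are transplanted from the $q^{4m}$ Bierbrauer case and would fail here. First, ``$n+h$ odd'' is not a hypothesis of the $\cZKW$ family: there $n$ need not be even (e.g.\ $h=5$, $n=1$ satisfies (\ref{form:cinesi1})), so your pivotal index $n/2$ need not even be an integer. The pairwise distinctness of the exponents $0,n,2h,h+n$ modulo $3h$ follows instead from $\gcd(h,n)=1$ together with $h>1$; it genuinely fails exactly for $(h,n)=(1,1)$ and $(1,2)$, which are the cases that must be (and in the paper are) disposed of separately. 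Second, the ``two surviving coefficients $a_0$ and $a_{n/2}$, then kill $a_{n/2}$'' structure is an artifact of Theorem \ref{thm:nucleiB}, where $\cN_m$ was known in advance to contain $F_{q^2}$ and $\psi$ could be written in $q^2$-powers. Here no such reduction is available: one must take $\psi(x)=\sum_{i=0}^{3h-1}a_ix^{q^i}$, and for $h>1$ the comparison of the coefficients of $x^{q^j}$ (for each $j\notin\{0,n,2h,h+n\}$, using that $y,y^{q^n},y^{q^{h+n}},y^{q^{2h}}$ are pairwise distinct) annihilates \emph{all} $a_i$ with $i\ne 0$; then matching the four expressions for $z$ obtained at $j=0,n,2h,h+n$ gives $z=a_0^{q^n}y$ together with $a_0^{q^{2n}}=a_0$ and $a_0^{q^{h}}=a_0$, whence $a_0\in\F_{q^{\gcd(2n,h)}}=\F_q$ since $h$ is odd and $\gcd(h,n)=1$. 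So your final relations are the right ones, but the route you sketch to them does not parse for odd $n$ and needs to be rewritten as above.

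Your treatment of the center, on the other hand, is fine and in fact slightly cleaner than the paper's: since $\cK_{m,\sigma}(\S)\subseteq\cN_m(\S)$ by point (d) of Theorem \ref{thm:nucleipresemifield} and $F_q\subseteq\cK_{m,\sigma}(\S)$ by Corollary \ref{cor:nucleipresemifield}, the equality $\cN_m=F_q$ immediately forces the center to have order $q$, with no second coefficient computation (the paper instead appeals to ``similar techniques'' and, implicitly, to commutativity giving $\K=\N_r=\N_l$). With the middle-nucleus bookkeeping corrected as indicated, and the degenerate cases $(h,n)=(1,1),(1,2)$ explicitly delegated to the discussion preceding the statement, your argument becomes a complete proof along the paper's lines.
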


\begin{cor}\label{cor:ZKWnew}
A $\cZKW$ presemifield  of order $q^3$, $q$ an odd prime power, with Multiplication (\ref{form:multiplZKW}), is isotopic to a Generalized twisted field.

A $\cZKW$ presemifield  of order $q^{3h}$, $q>3$ an odd prime power and $h>1$ odd integer, with Multiplication (\ref{form:multiplZKW}), is not isotopic to any known presemifield.
\end{cor}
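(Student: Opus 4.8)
The plan is to handle separately the two regimes in the statement, $h=1$ (order $q^3$) and $h>1$ (order $q^{3h}$), and in each case to read off the possibilities from Theorem~\ref{thm:nucleiZKW} together with the table of parameters of the known families (Table~\ref{eqtable}).

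Suppose first $h=1$, so $0<n<3h=3$ forces $n\in\{1,2\}$. If $n=2$, I would reduce Multiplication~(\ref{form:multiplZKW}) modulo $x^{q^{3}}-x$ and $y^{q^{3}}-y$: since $q^{h+n}=q^{3}$ and $q^{2h}=q^{2}$, the two twisted summands $y^{q^{h+n}}x^{q^{2h}}$ and $y^{q^{2h}}x^{q^{h+n}}$ become $yx^{q^{2}}$ and $y^{q^{2}}x$, so that $x\star y=(1-v)\bigl(x^{q^{2}}y+xy^{q^{2}}\bigr)$. Because $u$ is primitive in $\F_{q^{3}}$ and $3>1$ we have $v=u^{q-1}\neq 1$, so the scalar map $z\mapsto(1-v)z$ is an isotopism from the Generalized twisted field $x\diamond y=x^{q^{2}}y+xy^{q^{2}}$ onto $\cZKW$ (note that $x\mapsto x^{q^{2}}$ has fixed field $\F_{q}$, as $\gcd(2,3)=1$). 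If $n=1$, then $h+n=2\not\equiv 0\pmod 3$, so Condition~(\ref{form:bierb1}) must hold; by Theorem~\ref{thm:nucleiZKW} (equivalently, by the computation sketched just before it) a semifield isotopic to $\cZKW$ has centre of order $q$, hence is $3$--dimensional over its centre $\cong\F_{q}$, and, being proper, it is isotopic to a Generalized twisted field by Menichetti's classification~\cite{Menichetti1978}.

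Now suppose $h>1$ odd and $q>3$. By Theorem~\ref{thm:nucleiZKW}, a semifield $\S$ isotopic to $\cZKW$ has $\K(\S)\cong\F_{q}$ and $\N_{m}(\S)\cong\F_{q}$; thus $\S$ is $3h$--dimensional over its centre, an \emph{odd} number since $h$ is odd, and its middle nucleus has the same order as its centre. I would then run through Table~\ref{eqtable}. Since $q>3$, the centre of $\S$ has order $>3$, which excludes at once the Ganley, Cohen--Ganley, Coulter--Matthews/Ding--Yuan, Penttila--Williams/Bader--Lunardon--Pinneri and Coulter--Henderson--Kosick (pre)semifields, all of which have centre of order $3$. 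A Dickson semifield is $2k$--dimensional over its centre, hence even-dimensional, so it cannot be isotopic to $\cZKW$; similarly a Bierbrauer presemifield is $4m$--dimensional over its centre and a $\overline{BH}(q,\ell,d,\beta)$ presemifield is $2\ell$--dimensional over its centre, so their dimensions over the centre have the wrong parity, and moreover both have middle nucleus of order the square of the centre's order, which $\cZKW$ does not. The $\cP$ semifields are isotopic to $\overline{BH}$ presemifields, so they are excluded as well.

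The only family left with compatible invariants is that of the Generalized twisted fields, and ruling this one out is the step I expect to be the genuine obstacle: every invariant produced by the spread-set techniques of Section~2 (the orders of the three nuclei and of the centre, and hence the dimensions over them) agrees for $\cZKW$ and for a suitable Generalized twisted field of order $q^{3h}$ over $\F_{q}$, so one has to argue with an actual member of the isotopy class rather than with the spread set. For this I would invoke the non-isotopy with Generalized twisted fields already established in \cite[Cor.~1]{ZaKyWa2009}, \cite[Cor.~3]{LuMaPoTr2011} and \cite[Thm.~7]{Bierbrauer2009} (observing that, for $h>1$, the hypothesis $t\neq 2s$ used in \cite{ZaKyWa2009} is automatic, since $t=2s$ would force $\gcd(h,n)=h>1$), completing the remaining cases by an isotopy-form computation of the type carried out in \cite[Thm.~10]{LuMaPoTr2011}. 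Assembling the three previous paragraphs then yields the assertion.
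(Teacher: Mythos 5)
Your proposal is correct and follows essentially the same route as the paper: for $h=1$ the explicit reduction modulo $x^{q^3}-x$ (case $n=2$) and the center-of-order-$q$ plus Menichetti argument (case $n=1$), and for $h>1$, $q>3$ the exclusion of all known families by comparing center order and dimensions over the center/middle nucleus from Theorems \ref{thm:nucleiBH}, \ref{thm:nucleiB}, \ref{thm:nucleiZKW} and Table \ref{eqtable}, falling back on the previously established non-isotopy results (\cite{ZaKyWa2009}, \cite{LuMaPoTr2011}, \cite{Bierbrauer2009}) for the Generalized twisted fields, exactly as the paper does by citing \cite[Cor.~3]{LuMaPoTr2011}. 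The only cosmetic differences are that you rule out Dickson semifields by the parity of the dimension over the center rather than by citation, and that you spell out the $h=1$, $n=2$ computation which the paper declares ``clear''.
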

\begin{proof}
The first part has been proven above. By \cite[Cor. 3]{LuMaPoTr2011}, a $\cZKW$ presemifield of order $q^{3h}$, $q>3$ an odd prime power and $h>1$ odd integer, is isotopic neither to a Dickson semifield nor to a Generalized twisted field and, by Table \ref{eqtable}, it is not isotopic to any presemifield with characteristic $3$. Moreover, by Theorems \ref{thm:nucleiBH}, \ref{thm:nucleiB} and \ref{thm:nucleiZKW}, a $\cZKW$ presemifield is isotopic neither to a $\cBH$ presemifield nor a $\cB$ presemifield by comparing the dimensions of the involved presemifields over their center.
\end{proof}

\bigskip
\bigskip

The following table summarizes the state of the art on the presently known commutative presemifields whose multiplication are written pointing out their center. In this table we have also written the multiplication and the parameters of some presemifields very recently constructed in \cite{ZhPoSub}.

\begin{landscape}
\begin{table}[ht]
\caption{\bf Commutative proper presemifields of odd characteristic}\label{eqtable}
\renewcommand\arraystretch{1.3}
\noindent\[
\begin{array}{|c|c|c|c|c|c|}
\hline\hline
  \small \emph{TYPE} & \small SIZE & \small |\cK|\ \ |\N_m| &
\small {\emph{MULTIPLICATION}}  & \small \emph{EXIS. RESULTS} & \small \emph{REFER.}  \\
 \hline
 & & &  &  &\vspace*{-.4cm}\\
 \mbox{\small   $\cD$ } & \mbox{\tiny   $q^{2k}$, $k>1$ odd }& \mbox{\tiny $q\quad\quad q^k$} &
\mbox{\tiny $(a,b) \star (c,d)=(ac+jb^\sigma d^\sigma, ad+bc)$} & \mbox{\tiny  $\exists$  $\forall q$ odd,}  & \mbox{\tiny \cite{Dickson1905}, \cite{Dickson1906}, \cite{Dickson1906-1}}\\
& &
 & \mbox{\tiny where $\sigma \neq 1$ $\F_q$--autom. of $\F_{q^k}$ and $j$ nonsquare in $\F_{q^k}$} & &\\
 \hline
& & &  & & \vspace*{-.4cm}\\
 \mbox{\small   $\cA$ } & \mbox{\tiny   $q^t$, $t>1$ odd}   & \mbox{\tiny $q$\quad\quad $q$} &
\mbox{\tiny $x\star y= x^\alpha y+x y^\alpha,$ } & \mbox{\tiny $\exists$   $\forall q$ odd,}  & \mbox{\tiny  \cite{Albert1961P}, \cite{Albert1961A}} \\
& &  & \mbox{\tiny where $\alpha:x\mapsto x^{q^n}$ $\F_q$--autom. of $\F_{q^t}$, $\alpha^2\ne 1$ and $\frac t{\gcd(t,n)}$ odd} &&\\
\hline
& & &  & & \vspace*{-.4cm}\\
\mbox{\small   $\mathcal ZKW$ } & \mbox{\tiny   $q^{3h}$, $h>1$ odd   }& \mbox{\tiny
$q$\quad\quad $q$} &
\mbox{\tiny  $x\star y= x^{q^n} y+x y^{q^n}-u^{q^h-1}(x^{q^{2h}}y^{q^{h+n}}+y^{q^{2h}}x^{q^{h+n}})$} & \mbox{\tiny $\exists$   $\forall q$ odd,  $n+h \equiv 0\,(mod \,3)$}  & \mbox{\tiny  \cite{ZaKyWa2009}, \cite{Bierbrauer2010}, \cite{Bierbrauer2009}} \\
& &  & \mbox{\tiny where $ 0<n<3h$, $(h,n)=1$ and $u$ prim. elem. of $\F_{q^{3h}}$} & \mbox{\tiny $\exists$   $\forall q$ odd :  $q \equiv 1\,(mod \,3)$} & \\
\hline
& & &  & & \vspace*{-.4cm}\\
\mbox{\small   $\mathcal B$ } & \mbox{\tiny   $q^{4m}$, $m>1$ odd}   & \mbox{\tiny $q$\quad\quad
$q^2$} &
\mbox{\tiny $x\star y= x^{q^n} y+x y^{q^n}-u^{q^m-1}(x^{q^{3m}}y^{q^{m+n}}+y^{q^{3m}}x^{q^{m+n}})$} & \mbox{\tiny  $\exists$   $\forall q \equiv 1\,(mod 4)$,}  & \mbox{\tiny  \cite{Bierbrauer2010}, \cite{Bierbrauer2009}} \\
& &  & \mbox{\tiny where $ 0<n<4m$, $n$ even,  $\gcd(m,n)=1$ and $u$ prim. elem. of $\F_{q^{4m}}$} & &\\

 \hline
& & &  & & \vspace*{-.4cm}\\

\mbox{\small   ${\mathcal BH}$ } & \mbox{\tiny $q^{2\ell}$, $\ell>2$}   & \mbox{\tiny $q$\quad\quad
$q^2$} & \mbox{\tiny
$x\star y= xy^{q^\ell}+x^{q^\ell}y+[\beta(xy^{q^d}+x^{q^d}y)+\beta^{q^\ell}(xy^{q^d}+x^{q^d}y)^{q^\ell}]\omega$} & \mbox{\tiny $\exists$   $\forall q$ odd,}  & \mbox{\tiny  \cite{BuHe2010}, \cite{LuMaPoTr2011}, \cite{BierbrauerSub}}\\
& &  & \mbox{\tiny where $ 0<d<2\ell$, $\gcd(\ell,d)=1$, $\ell+d$ odd,  $\beta$ nonsquare of $\F_{q^{2\ell}}$ and $\omega^{q^\ell}=-\omega$} & &\\

 \hline
& & &  & & \vspace*{-.4cm}\\

\mbox{\small   ${\mathcal ZP}$ } & \mbox{\tiny $q^{2\ell}$, $\ell>2$}   & \quad\mbox{\tiny $q$\quad $q^2$ (if $\sigma=1$)} & \mbox{\tiny
$(a,b) \star (c,d)= (ac^{q^n}+a^{q^n}c+\alpha(bd^{q^n}+b^{q^n}d)^\sigma,ad+bc),$} & \mbox{\tiny  $\exists$   $\forall q$ odd,}  & \mbox{\tiny  \cite{ZhPoSub}} \\
& & \quad\mbox{\tiny $q$\quad
$q$ \ \,(if $\sigma\ne\,1$)}& \mbox{\tiny where $\sigma:x\mapsto x^{q^t}$ autom. of $\F_{q^\ell}$, $\gcd(\ell,n,t)=1$, $\frac \ell{\gcd(\ell,n)}$ odd and  $\alpha$ nonsquare of $\F_{q^{\ell}}$}
& &\\
 \hline
& & &  & & \vspace*{-.4cm}\\
\mbox{\small   $\mathcal CG$ } & \mbox{\tiny $3^{2s}$, $s\geq 3$}    & \mbox{\tiny $3$
\quad $3^s$} &
\mbox{\tiny $(a,b) \star (c,d)=(ac+jbd+j^3(bd)^9, ad+bc+j(bd)^3)$ } & \mbox{\tiny }  & \mbox{\tiny \cite{CoGa1982}} \\
& &  & \mbox{\tiny $j$ nonsquare in $\F_{3^s}$}  & &\\
 \hline
& & &  & & \vspace*{-.4cm}\\
\mbox{\small   $\cG$ } & \mbox{\tiny $3^{2r}$, $r\geq 3$ odd} & \mbox{\tiny $3$\quad\quad
$3$} &
\mbox{\tiny $(a,b) \star (c,d)=(ac-b^9d-bd^9, ad+bc+(bd)^3)$} & \mbox{}  & \mbox{\tiny \cite{Ganley1981}} \\
& &  & & &\vspace*{-.4cm}\\

\hline
& & &  & & \vspace*{-.4cm}\\

\mbox{\small   $\cCM/\cDY$ } & \mbox{\tiny $3^{e}$, $e\geq 5$ odd} & \mbox{\tiny
$3$\quad\quad $3$} &
\mbox{\tiny $x \star y=x^9y+xy^9\pm 2x^3y^3-2xy$ } & \mbox{}  & \mbox{\tiny  \cite{CoMa1997}, \cite{DiYu2006}, \cite{CoHe2008}}\\
& &  & & &\vspace*{-.4cm}\\

 \hline

& & &  & & \vspace*{-.4cm}\\
\mbox{\small   $\cPW/\cBLP$} & \mbox{\tiny $3^{10}$},   & \mbox{\tiny $3$\quad\quad $3^5$} &
\mbox{\tiny $(a,b) \star (c,d)=(ac+(bd)^9, ad+bc+(bd)^{27})$ } & \mbox{}  & \mbox{\tiny \cite{PeWi2000}, \cite{BaLuPi1999}}\\
& &  & & &\vspace*{-.4cm}\\

 \hline

& & &  & & \vspace*{-.4cm}\\

\mbox{\small   $\mathcal CHK$ } & \mbox{\tiny $3^{8}$},   & \mbox{\tiny $3$\quad\quad $3^2$} &
\mbox{\tiny $x\star y=xy+L(xy^9+x^9y-xy-x^9y^9)+x^{243}y^3+x^{81}y-x^{9}y+x^3y^{243}+xy^{81}-xy^{9}$} & {}  & \mbox{\tiny \cite{CoHeKo2007}}\\
& &  & \mbox{\tiny where $L(x)=x^{3^5}+x^{3^2}$} & &\\

\hline \hline
\end{array}
\]
\end{table}

\end{landscape}

Some explanatory comments on the above table are needed. The sizes of the middle nucleus and the center of a
commutative (pre)semifield belonging to one of the families $\cD$, $\cA$, $\cZKW$, $\cB$, $\cBH$, $\cCG$, $\cG$, $\cCM/\cDY$, $\cPW/\cBLP$, $\cCHK$ and $\mathcal ZP$ are listed in the middle columns. The fifth column contains the existence results of commutative presemifields belonging to the above families.

\subsection{Final remarks}

By comparing the dimensions of the commutative presemifields over
their middle nucleus and their center, taking into account their
characteristic and using  Corollaries \ref{cor:BHnew},
\ref{cor:Bnew}, \ref{cor:Bnew1} and \ref{cor:ZKWnew}, it follows
that the family of $\cZKW$ presemifields of order $q^{3h}$
(with $h>1$) and the family of $\cBH$ presemifields of order $q^{2\ell}$
(with $\ell>2$) are not isotopic and they are actually new. More
precisely, for $q>3$, these presemifields are not isotopic to any
previously known. Regarding the family of
$\cB$ presemifields, so far it is not still clear whether it
contains new examples of presemifields. Indeed, by Table \ref{eqtable} and by
comparing the parameters, it is not possible to exclude the
possibility that a $\cB$ presemifield turns out to be isotopic
to a $\cBH$ presemifield. As well as, it remains to investigate
whether a $\cCM/\cDY$ presemifield of order $3^n$,
$n\equiv\,0(mod\,3)$ could be isotopic to a $\cZKW$ presemifield
and whether the $\cCHK$ presemifield belongs, up to isotopy, to
the family of $\cBH$ presemifields.

Finally in \cite{ZhPoSub}, the authors presented a family of presemifields (the {\em $\mathcal ZP$ presemifields}) of order $q^{2\ell}$ and center of order $q$, computing their nuclei. Moreover, they show that, when $\sigma=id$, in some cases, the $\cBH$ presemifields are contained, up to isotopisms, in the family of $\mathcal ZP$ presemifields, whereas when $\ell>3$ is odd and $q\equiv 1(mod\,4)$ the latter family contains presemifields which are non isotopic to any previously known. Obviously, it would be interesting to complete the study of the isotopisms between these two last mentioned families of presemifields for each value of $\ell$ (odd or even) and for each odd characteristic.

\bibliographystyle{amsalpha}

\end{document}